\def\thm@space@setup{%
	\thm@preskip=\parskip \thm@postskip=0pt
}
\titlespacing{\section}{0pt}{1ex}{-1ex}
\titlespacing{\subsection}{0pt}{1ex}{-1ex}
\begin{document}
\title{\vspace{-1cm}\textbf{A sub-additive inequality for the volume spectrum}}
\author{Akashdeep Dey \thanks{Email: adey@math.princeton.edu, dey.akash01@gmail.com}}
\date{}
\maketitle

\theoremstyle{plain}
\newtheorem{thm}{Theorem}[section]
\newtheorem{lem}[thm]{Lemma}
\newtheorem{pro}[thm]{Proposition}
\newtheorem{clm}[thm]{Claim}
\newtheorem*{thm*}{Theorem}
\newtheorem*{lem*}{Lemma}
\newtheorem*{clm*}{Claim}

\theoremstyle{definition}
\newtheorem{defn}[thm]{Definition}
\newtheorem{ex}[thm]{Example}
\newtheorem{rmk}[thm]{Remark}

\numberwithin{equation}{section}

\newcommand{\mf}{manifold\;}
\newcommand{\vf}{varifold\;}
\newcommand{\hy}{hypersurface\;}
\newcommand{\Rm}{Riemannian\;}
\newcommand{\cn}{constant\;}
\newcommand{\mt}{metric\;} 
\newcommand{\st}{such that\;}
\newcommand{\Thm}{Theorem\;}
\newcommand{\Lem}{Lemma\;}
\newcommand{\Pro}{Proposition\;}
\newcommand{\eqn}{equation\;}
\newcommand{\te}{there exist\;}\newcommand{\tes}{there exists\;}\newcommand{\Te}{There exist\;}\newcommand{\Tes}{There exists\;}
\newcommand{\tf}{Therefore,\;} \newcommand{\hn}{Hence,\;}\newcommand{\Sn}{Since\;}\newcommand{\sn}{since\;}\newcommand{\nx}{\Next,\;}\newcommand{\df}{define\;}
\newcommand{\wrt}{with respect to\;}
\newcommand{\bbr}{\mathbb{R}}\newcommand{\bbq}{\mathbb{Q}}
\newcommand{\bbn}{\mathbb{N}}
\newcommand{\bbz}{\mathbb{Z}}
\newcommand{\mres}{\scalebox{1.8}{$\llcorner$}}
\newcommand{\ra}{\rightarrow}
\newcommand{\fn}{function\;}
\newcommand{\lra}{\longrightarrow}
\newcommand{\sps}{Suppose\;}
\newcommand{\del}{\partial}
\newcommand{\seq}{sequence\;}
\newcommand{\cts}{continuous\;} 
\newcommand{\bF}{\mathbf{F}} 
\newcommand{\bM}{\mathbf{M}} 
\newcommand{\bL}{\mathbf{L}}
\newcommand{\cm}{\mathcal{C}(M)}
\newcommand{\zn}{\mathcal{Z}_n(M; \mathbb{Z}_2)}

\newcommand{\cA}{\mathcal{A}}\newcommand{\cB}{\mathcal{B}}\newcommand{\cC}{\mathcal{C}}\newcommand{\cD}{\mathcal{D}}\newcommand{\cE}{\mathcal{E}}\newcommand{\cF}{\mathcal{F}}\newcommand{\cG}{\mathcal{G}}\newcommand{\cH}{\mathcal{H}}\newcommand{\cI}{\mathcal{I}}\newcommand{\cJ}{\mathcal{J}}\newcommand{\cK}{\mathcal{K}}\newcommand{\cL}{\mathcal{L}}\newcommand{\cM}{\mathcal{M}}\newcommand{\cN}{\mathcal{N}}\newcommand{\cO}{\mathcal{O}}\newcommand{\cP}{\mathcal{P}}\newcommand{\cQ}{\mathcal{Q}}\newcommand{\cR}{\mathcal{R}}\newcommand{\cS}{\mathcal{S}}\newcommand{\cT}{\mathcal{T}}\newcommand{\cU}{\mathcal{U}}\newcommand{\cV}{\mathcal{V}}\newcommand{\cW}{\mathcal{W}}\newcommand{\cX}{\mathcal{X}}\newcommand{\cY}{\mathcal{Y}}\newcommand{\cZ}{\mathcal{Z}}

\newcommand{\sA}{\mathscr{A}}\newcommand{\sB}{\mathscr{B}}\newcommand{\sC}{\mathscr{C}}\newcommand{\sD}{\mathscr{D}}\newcommand{\sE}{\mathscr{E}}\newcommand{\sF}{\mathscr{F}}\newcommand{\sG}{\mathscr{G}}\newcommand{\sH}{\mathscr{H}}\newcommand{\sI}{\mathscr{I}}\newcommand{\sJ}{\mathscr{J}}\newcommand{\sK}{\mathscr{K}}\newcommand{\sL}{\mathscr{L}}\newcommand{\sM}{\mathscr{M}}\newcommand{\sN}{\mathscr{N}}\newcommand{\sO}{\mathscr{O}}\newcommand{\sP}{\mathscr{P}}\newcommand{\sQ}{\mathscr{Q}}\newcommand{\sR}{\mathscr{R}}\newcommand{\sS}{\mathscr{S}}\newcommand{\sT}{\mathscr{T}}\newcommand{\sU}{\mathscr{U}}\newcommand{\sV}{\mathscr{V}}\newcommand{\sW}{\mathscr{W}}\newcommand{\sX}{\mathscr{X}}\newcommand{\sY}{\mathscr{Y}}\newcommand{\sZ}{\mathcal{Z}}

\newcommand{\al}{\alpha}\newcommand{\be}{\beta}\newcommand{\ga}{\gamma}\newcommand{\de}{\delta}\newcommand{\ve}{\varepsilon}\newcommand{\et}{\eta}\newcommand{\ph}{\phi}\newcommand{\vp}{\varphi}\newcommand{\ps}{\psi}\newcommand{\ka}{\kappa}\newcommand{\la}{\lambda}\newcommand{\om}{\omega}\newcommand{\rh}{\rho}\newcommand{\si}{\sigma}\newcommand{\tht}{\theta}\newcommand{\ta}{\tau}\newcommand{\ch}{\chi}\newcommand{\ze}{\zeta}\newcommand{\Ga}{\Gamma}\newcommand{\De}{\Delta}\newcommand{\Ph}{\Phi}\newcommand{\Ps}{\Psi}\newcommand{\La}{\Lambda}\newcommand{\Om}{\Omega}\newcommand{\Si}{\Sigma}\newcommand{\Tht}{\Theta}\newcommand{\na}{\nabla}

\newcommand{\nm}[1]{\left\|#1\right\|}\newcommand{\md}[1]{\left|#1\right|}\newcommand{\Md}[1]{\Big|#1\Big|}\newcommand{\db}[1]{[\![#1]\!]}
\newcommand{\vol}{\operatorname{Vol}}\newcommand{\tp}{\tilde{\Ph}}\newcommand{\tps}{\tilde{\Ps}}\newcommand{\tx}{\tilde{X}}\newcommand{\ty}{\tilde{Y}}\newcommand{\zt}{\tilde{Z}}

\vspace{-2ex}
\begin{abstract}
	\vspace{-1.5ex}
	\noindent
Let \((M,g)\) be a closed Riemannian manifold and \(\{\om_p\}_{p=1}^{\infty}\) be the volume spectrum of \((M,g)\). We will show that \(\om_{k+m+1}\leq \om_k+\om_m+W\) for all \(k,m\geq 0\), where \(\om_0=0\) and \(W\) is the one-parameter Almgren-Pitts width of \((M,g)\). We will also prove the similar inequality for the $\ve$-phase-transition spectrum \(\{c_{\ve}(p)\}_{p=1}^{\infty}\) using the Allen-Cahn approach.
\end{abstract}

\section{Introduction}
For a closed \Rm manifold \((M^{n+1},g)\), the spectrum of the Laplacian on \((M,g)\), denoted by \(\{\la_p\}_{p=1}^{\infty}\), \(0<\la_1\leq \la_2\leq\dots\), has the following min-max characterization. Let \(H^1(M)\) be the Sobolev space of real valued functions \(\vp \in L^2(M)\) \st \(\na \vp\), the distributional derivative of \(\vp\), is also in \(L^2(M)\). \sps \(\cV_p\) denotes the set of all \(p\)-dimensional vector subspaces of \(H^1(M)\). Then
\begin{equation}\label{e.lampda_p}
\la_p=\inf_{V\in \cV_p}\sup_{f\in V\setminus\{0\}} R(f)\quad \text{ where }\quad R(f)=\frac{\int_{M}\md{\na f}^2}{\int_{M}f^2}.
\end{equation}
The Rayleigh quotient \(R(f)\) is invariant under the scaling, i.e. for all \(c\in \bbr\setminus\{0\}\), \(R(f)=R(cf)\). \hn it descends to a well-defined functional on \(\mathbb{P}(H^1(M))\), the projective space associated to the real vector space \(H^1(M)\).

In \cite{gro,gro1,gro2}, Gromov introduced various non-linear analogues of the spectrum of the Laplacian. In the context of the area functional and the minimal hypersurfaces, the relevant spectrum is the volume spectrum \(\{\om_p\}_{p=1}^{\infty}\). \(\om_p\) is defined by a min-max quantity, similar to \eqref{e.lampda_p}. One replaces the vector space \(H^1(M)\) by \(\zn\), the space of mod \(2\) flat hypercycles which bound a region in \(M\) and the Rayleigh quotient by the area functional (see Section \ref{s2.2} for the precise definition). By the works of Almgren \cite{alm_article} and Marques-Neves \cite{mn_morse}, the space \(\zn\) is weakly homotopy equivalent to \(\bbr\mathbb{P}^{\infty}\). The cohomology ring \(H^*(\zn,\bbz_2)\) is the polynomial ring \(\bbz_2[\overline{\la}]\) where \(\overline{\la}\in H^1(\zn,\bbz_2) \). In the definition of \(\om_p\), instead of considering \(p\)-dimensional vector subspaces of \(H^1(M)\), one considers all the \(\sS\subset\zn\) \st \(\overline{\la}^p\big|_{\sS}\neq 0\) in \(H^p(\sS,\bbz_2).\)

The connection between the volume spectrum and the minimal hypersurfaces comes from the Almgren-Pitts min-max theory, developed by Almgren \cite{alm}, Pitts \cite{pitts}, Schoen-Simon \cite{ss}. If \(\Pi\) is a homotopy class of maps \(X \ra \zn\), by the Almgren-Pitts min-max theory, the width of \(\Pi\) is achieved by the area of a closed, minimal hypersurface (which can have a singular set of Hausdorff dimension \(\leq n-7\) if the ambient dimension \(n+1\geq 8\)), possibly with multiplicities. The index upper bound of the min-max minimal hypersurfaces was proved by Marques-Neves \cite{MN_index} and Li \cite{Li_index}. Sharp \cite{Sharp} proved the compactness of the space of minimal hypersurfaces with bounded area and index, which holds in higher dimensions as well \cite{d0}. Combining all these, one can conclude that \cite{imn}*{Proposition 2.2} for each \(p \in \bbn\), \(\om_p\) is achieved by the area of a closed, minimal hypersurface with optimal regularity (possibly with multiplicities), whose index is bounded above by \(p\); see also \cite{li}*{Corollary 3.2} where Li gave a different argument. Moreover, if the ambient dimension \(3\leq n+1\leq 7\), by the works of Marques-Neves \cite{mn_morse} and Zhou \cite{zhou2}, for a generic (bumpy) metric, \(\om_p\) is realized by the area of a closed, two-sided, minimal hypersurface with multiplicity one, whose index \(=p\). When the ambient dimension is \(3\), this was also proved by Chodosh and Mantoulidis \cite{cm}, in the Allen-Cahn setting.

By the works of Gromov \cite{gro,gro1}, Guth \cite{guth}, Marques and Neves \cite{mn_ricci_positive}, there exist positive constants \(C_1\), \(C_2\), depending on the \mt \(g\), \st
\begin{equation}\label{e.sublin.growth}
C_1p^{\frac{1}{n+1}}\leq \om_p\leq C_2p^{\frac{1}{n+1}}\quad \forall p\in \bbn.
\end{equation}
In \cite{lmn}, Liokumovich, Marques and Neves proved that \(\{\om_p\}_{p=1}^{\infty}\) satisfies the following Weyl law, which was conjectured by Gromov \cite{gro1}.
\begin{equation}\label{e.weyl.law}
\lim_{p\ra\infty}\om_p(M,g)p^{-\frac{1}{n+1}}=a(n)\vol(M,g)^{\frac{n}{n+1}},
\end{equation}
where \(a(n)>0\) is a constant, which depends only on the ambient dimension.

The above mentioned theorems regarding the asymptotic behaviour of \(\{\om_p\}_{p=1}^{\infty}\), has a number of important applications, which we discuss now. By the works of Marques-Neves \cite{mn_ricci_positive} and Song \cite{song}, every closed \mf \((M^{n+1},g)\), \(3\leq n+1\leq 7\), contains infinitely many closed minimal hypersurfaces, which confirms a conjecture of Yau \cite{yau}. In \cite{imn}, Irie, Marques and Neves proved that for a generic metric \(g\) on \(M^{n+1}\), \(3\leq n+1\leq 7\), the union of all closed minimal hypersurfaces is dense in \(M\). This theorem was quantified in \cite{mns} by Marques, Neves and Song, where they proved that for a generic \mt \(g\), there exists an equidistributed sequence of closed minimal hypersurfaces in \((M,g)\). Recently, Song and Zhou \cite{SZ} proved the generic scarring phenomena for minimal hypersurfaces, which can be interpreted as the opposite of the equidistribution phenomena. (The arguments in the papers \cite{song} and \cite{SZ} use the Weyl law for the volume spectrum on certain \textit{non-compact manifolds} with cylindrical ends.) In higher dimensions, Li \cite{li} proved the existence of infinitely many closed minimal hypersurfaces (with optimal regularity) for a generic set of metrics.

The properties of the volume spectrum also turn out to be useful in the context of constant mean curvature (CMC) hypersurfaces. In \cite{zz1}, Zhou and Zhu developed the min-max theory for CMC hypersurfaces, which was further extended by Zhou \cite{zhou2}. In particular, they \cite{zz1} proved that for all \(c>0\), every closed manifold \((M^{n+1},g)\), \(n+1\geq 3\), contains a closed \(c\)-CMC hypersurface (with optimal regularity). Building on the works in \cite{zz1} and \cite{zhou2}, we proved in \cite{d1} that the number of closed \(c\)-CMC hypersurfaces (with optimal regularity) in \((M^{n+1},g)\) is at least \(\varrho_0c^{-\frac{1}{n+1}}\), where \(\varrho_0>0\) is a constant, which depends on the \mt \(g\). To obtain this estimate, we used the lower bound of \(\om_p\), stated in \eqref{e.sublin.growth} and the following inequality.
\begin{equation}\label{e.easy.subadd}
\om_{p+1}\leq \om_p +W \quad \forall p\in \bbn,
\end{equation}
where \(W\) is the one parameter Almgren-Pitts width of \((M,g)\).

In the present article, we will prove a more general sub-additive inequality for the volume spectrum, as stated below.
\begin{thm}\label{thm 1.1}
Let \((M,g)\) be a closed Riemannian manifold. Let \(\{\om_p\}_{p=1}^{\infty}\) be the volume spectrum and \(W\) be the one parameter Almgren-Pitts width of \((M,g)\) (see Section \ref{s2.2}). Then, for all \(k,m\geq 0\),
\begin{equation}\label{eq1.1}
\om_{k+m+1}\leq \om_k+\om_m+W,
\end{equation}
where we set \(\om_0=0.\)
\end{thm}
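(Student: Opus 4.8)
The plan is to combine two families of min-max parameter spaces — one detecting $\omega_k$, one detecting $\omega_m$, and the one-parameter sweepout detecting $W$ — into a single family detecting $\omega_{k+m+1}$, and then bound the max of the areas over the combined family by the sum of the three individual widths. Concretely, recall that $\omega_k=\inf_{\sS}\sup_{x\in\sS}\bM(\Phi(x))$ where $\sS$ ranges over subsets of $\zn$ with $\overline{\la}^k\big|_{\sS}\neq 0$; equivalently (after the Marques–Neves reformulation) one may take $\sS$ to be the image of a continuous map $\Phi_k\colon X_k\to\zn$ from a finite simplicial complex $X_k$ with $\Phi_k^*\overline{\la}^k\neq 0$ in $H^k(X_k;\bbz_2)$, and similarly $\Phi_m\colon X_m\to\zn$ with $\Phi_m^*\overline{\la}^m\neq 0$, and $\Phi_1\colon Y\to\zn$ a one-sweepout realizing $W$ with $\Phi_1^*\overline{\la}\neq 0$. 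Fix $\ve>0$; choose these three sweepouts so that their widths are within $\ve$ of $\omega_k$, $\omega_m$, $W$ respectively.

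The key geometric step is an addition (join/concatenation) operation on cycles: given two flat cycles $T_1,T_2\in\zn$ that bound regions $\Omega_1,\Omega_2$, one forms $T_1+T_2$ (boundary of the symmetric difference, in $\bbz_2$ coefficients), and $\bM(T_1+T_2)\leq\bM(T_1)+\bM(T_2)$. The subtlety is that $(T_1,T_2)\mapsto T_1+T_2$ is not continuous on all of $\zn\times\zn$ in the $\bF$-metric, so one cannot naively define $\Phi(x_k,x_m,y)=\Phi_k(x_k)+\Phi_m(x_m)+\Phi_1(y)$. The standard fix, going back to the proof of \eqref{e.easy.subadd} and to Liokumovich–Marques–Neves, is to first discretize: replace each $\Phi_j$ by a map into the cycle space that is fine in the mass norm and takes values in a controlled subcomplex, perform the addition at the discrete level (where it is well-defined and mass-subadditive), and then apply the Almgren interpolation machinery to recover a continuous map $\Phi\colon X_k\ast X_m\ast Y\to\zn$ (using the join $X_k\ast X_m\ast Y$ as the new parameter space) with $\sup\bM\circ\Phi\leq\omega_k+\omega_m+W+C\ve$, where the interpolation error $C\ve$ can be absorbed since $\ve$ is arbitrary. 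One should be careful to control the "fineness" of the added map so that the no-concentration-of-mass hypothesis of the interpolation theorem holds.

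The remaining cohomological step is to verify that the join map $\Phi\colon X_k\ast X_m\ast Y\to\zn$ is a valid competitor for $\omega_{k+m+1}$, i.e. $\Phi^*\overline{\la}^{\,k+m+1}\neq 0$ in $H^{k+m+1}(X_k\ast X_m\ast Y;\bbz_2)$. This follows because the addition map on $\zn$ induces on the relevant cohomology class $\overline{\la}$ (the generator of $H^1(\zn;\bbz_2)=H^1(\bbr\mathbb{P}^\infty;\bbz_2)$) the "sum" behavior: the composition of $\Phi$ with $\overline{\la}$ classifies the tensor/join of the three line bundles pulled back by $\Phi_k,\Phi_m,\Phi_1$, so $\Phi^*\overline{\la}=\iota_k^*\Phi_k^*\overline{\la}+\iota_m^*\Phi_m^*\overline{\la}+\iota_Y^*\Phi_1^*\overline{\la}$ in the reduced cohomology of the join, and by the Künneth/join formula the product of the $k$-th, $m$-th and $1$-st powers of these classes is the nonzero top class in $H^{k+m+1}(X_k\ast X_m\ast Y;\bbz_2)$. (This is exactly the mechanism Marques–Neves and Liokumovich–Marques–Neves use to show that joining $p$-sweepouts gives a $(p+q+1)$-sweepout.) Taking the infimum over all choices and letting $\ve\to0$ yields \eqref{eq1.1}; the boundary cases $k=0$ or $m=0$ are handled by interpreting $X_0$ as a point and recovering \eqref{e.easy.subadd}.

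I expect the main obstacle to be the discretization/interpolation bookkeeping in the second paragraph: making the addition operation genuinely continuous on the nose requires passing through the Almgren–Pitts discrete setting, controlling fineness and fineness of the deformation simultaneously for three maps, and then checking the hypotheses of the interpolation theorem for the summed map — in particular that the added map has no concentration of mass and that the error incurred is $o(1)$ as the discretization is refined. The cohomological step, while conceptually the crux of why the *indices* add up correctly, is essentially formal once the continuous summed sweepout is in hand.
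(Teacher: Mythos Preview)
Your proposal has two genuine gaps, and the paper's construction is quite different from what you outline.

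First, your diagnosis of the obstacle is wrong: addition $(T_1,T_2)\mapsto T_1+T_2$ of mod-$2$ flat chains \emph{is} continuous in the flat metric (it is $1$-Lipschitz, since $\cF$ is a norm), and mass is subadditive by the triangle inequality for $\bM$. So the discretization/interpolation machinery you invoke is aimed at a non-problem. Second, and fatally, your proposed parameter space $X_k*X_m*Y$ cannot carry any $p$-sweepout for $p\ge 1$: the join of path-connected spaces is simply connected, so $H^1(X_k*X_m*Y;\bbz_2)=0$ and $\Phi^*\overline{\la}=0$ for \emph{every} map $\Phi$ into $\zn$. Your formula ``$\Phi^*\overline{\la}=\iota_k^*\Phi_k^*\overline{\la}+\cdots$'' is not meaningful as written (the summands live in cohomology of the factors, not of the join), and the map $\Phi_k(x_k)+\Phi_m(x_m)+\Phi_1(y)$ is only defined on the \emph{product}, where the cohomology fails for a different reason: the multinomial coefficient in $(\la_k+\la_m+\la_1)^{k+m+1}$ is typically even.

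What the paper actually does: lift the $k$- and $m$-sweepouts through the double cover $\del:\cm\to\zn$ to $\bbz_2$-equivariant maps $\tp_1:\tx\to\cm$, $\tp_2:\ty\to\cm$, and use the one-parameter family $\La:[0,1]\to\cm$ realizing $W$ not as a third join factor but as the \emph{join coordinate} in the two-fold join $\tx*\ty$, via
\[
\tps(x,y,t)=\bigl(\tp_1(x)\cap\La(t)\bigr)\cup\bigl(\tp_2(y)\cap(M\setminus\La(t))\bigr)\in\cm.
\]
Intersection and union of Caccioppoli sets are flat-Lipschitz (symmetric-difference estimate), so $\tps$ is continuous with no discretization; a BV-approximation argument gives $\bM(\del\tps)\le\bM(\Ph_1)+\bM(\Ph_2)+\bM(\del\La)$. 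The map is $\bbz_2$-equivariant and descends to $\Ps:Z\to\zn$ with $Z=(\tx*\ty)/\bbz_2$; now $\pi_1(Z)=\bbz_2$, so $H^1(Z;\bbz_2)\ne 0$, and a chain-level argument with the transfer homomorphism shows $\Ps$ is a $(k+m+1)$-sweepout. The ideas you are missing are precisely: pass to double covers, use $\La$ as the join parameter rather than an additive term, and replace cycle addition by the Caccioppoli intersection/union interpolation.
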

A similar inequality also holds for the phase-transition spectrum (as stated below in Theorem \ref{thm 1.2}), which is the Allen-Cahn analogue of the volume spectrum. The Allen-Cahn min-max theory is a PDE based approach to the min-max construction of minimal hypersurfaces, which was introduced by Guaraco \cite{Guaraco} and further extended by Gaspar and Guaraco \cite{GG1}. The regularity of the minimal hypersurfaces, obtained from the Allen-Cahn theory, depends on the previous works by Hutchinson-Tonegawa \cite{HT}, Tonegawa \cite{t}, Wickramasekera \cite{w} and Tonegawa-Wickramasekera \cite{TW}. In \cite{GG1}, Gaspar and Guaraco defined the phase-transition spectrum and proved a sub-linear growth estimate for them (which is similar to \eqref{e.sublin.growth}). In \cite{GG2}, they proved a Weyl law for the phase-transition spectrum (which is similar to the Weyl law for the volume spectrum \eqref{e.weyl.law}).
\begin{thm}\label{thm 1.2}
	Let \((M,g)\) be a closed Riemannian manifold. For \(\ve>0\), let \( \{c_{\ve}(p)\}_{p=1}^{\infty} \) be the \(\ve\)-phase-transition spectrum and \(\ga_{\ve}\) be the one parameter \(\ve\)-Allen-Cahn width of \((M,g)\) (see Section \ref{s2.3}). Then, for all \(k,m\geq 0\),
	\begin{equation}\label{eq1.2}
	c_{\ve}(k+m+1)\leq c_{\ve}(k)+c_{\ve}(m)+\ga_{\ve},
	\end{equation}	
	where \(c_{\ve}(0)=0.\) Hence, letting \(\ve\ra 0^+\), we obtain the following inequality for the phase-transition spectrum \(\{\ell_p\}_{p=1}^{\infty}\).
\[\ell_{k+m+1}\leq \ell_k+\ell_m+\ga,\]
where \(\ell_0=0\) and \(\ga\) is the one parameter Allen-Cahn width of \((M,g)\).
\end{thm}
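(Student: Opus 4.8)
The plan is to prove Theorem~\ref{thm 1.1} directly at the level of cohomological families in $\zn$, and then transport the argument verbatim to the Allen-Cahn setting for Theorem~\ref{thm 1.2}. Recall that $\om_k$ is the infimum of $\sup_{\sS}\,\bM$ over all compact families $\sS\subset\zn$ admissible for the class $\overline{\la}^k$, and $W$ is the one-parameter Almgren-Pitts width, i.e.\ the width of a sweepout detecting $\overline{\la}$. First I would fix arbitrary admissible families $\sS_k$ for $\overline{\la}^k$ and $\sS_m$ for $\overline{\la}^m$ with $\sup_{\sS_k}\bM\le \om_k+\eta$ and $\sup_{\sS_m}\bM\le \om_m+\eta$, and a one-parameter sweepout $\Phi\colon [0,1]\to\zn$ (with $\Phi(0)=\Phi(1)=0$ as a cycle, so it descends to $S^1$) detecting $\overline{\la}$ with $\sup_t \bM(\Phi(t))\le W+\eta$.

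The heart of the matter is a \emph{concatenation/addition} construction: given cycles $T_1$ (ranging over the domain of $\sS_k$), $T_2$ (ranging over the domain of $\sS_m$), and $\Phi(t)$, produce a family of cycles $T_1 + T_2 + \Phi(t)$ — where the sum is taken in $\mathcal{Z}_n(M;\bbz_2)$ — parametrized by the domain $X_k\times X_m\times S^1$, whose mass is bounded above by $\bM(T_1)+\bM(T_2)+\bM(\Phi(t))\le \om_k+\om_m+W+3\eta$ (here one must be slightly careful: mass is subadditive under addition of flat cycles, so this bound holds). The key cohomological input is then that on this product family the class $\overline{\la}$ pulls back, under the addition map, to $p_1^*\overline{\la}+p_2^*\overline{\la}+p_3^*\overline{\la}$ modulo the relevant identifications — this is exactly the statement that the classifying map for $\zn\simeq\bbr\mathbb{P}^\infty$ sends a sum of cycles to the ``sum'' of their classifying maps, which follows from the fact that $\bbr\mathbb{P}^\infty$ is an Eilenberg-MacLane space $K(\bbz_2,1)$ and the Almgren isomorphism is additive (cf.\ Marques-Neves \cite{mn_morse}, Lusternik-Schnirelmann theory). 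Consequently $\overline{\la}^{\,k+m+1}$ restricted to the product family equals (up to sign, irrelevant mod $2$) a sum of monomials, the dominant one being $\binom{k+m+1}{k,\,m,\,1}\,p_1^*\overline{\la}^k\smile p_2^*\overline{\la}^m\smile p_3^*\overline{\la}$, whose coefficient $(k+m+1)!/(k!\,m!\,1!)$ is odd precisely when\ldots\ — and here is the subtlety — one cannot guarantee the multinomial coefficient is odd. The standard fix (used by Lokumovich-Marques-Neves and in Gromov's work) is to instead use that $\overline{\la}^k|_{\sS_k}\ne 0$, $\overline{\la}^m|_{\sS_m}\ne 0$, $\overline{\la}|_{\Phi}\ne 0$ \emph{forces}, via the cup-product structure on the product (Künneth) and the fact that $H^*(X;\bbz_2)$ has no subtlety of coefficients when the factors are one-dimensional generators of a polynomial algebra, that $\overline{\la}^{k+m+1}$ is nonzero on the sub-family obtained by restricting to where each factor's power is ``used up'' — concretely, restrict the $S^1$-parameter sweepout to an arc and use that the only monomial in $(p_1^*\overline{\la}+p_2^*\overline{\la}+p_3^*\overline{\la})^{k+m+1}$ surviving the cup-length constraints on $X_k$ (cup-length $k$) and $X_m$ (cup-length $m$) is the one with exactly $p_1^*\overline{\la}^k\smile p_2^*\overline{\la}^m\smile p_3^*\overline{\la}$ — and its coefficient is $1$ mod $2$ because it is the unique surviving term, not because the multinomial coefficient is odd.

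With the cohomological nonvanishing in hand, the new family $\{T_1+T_2+\Phi(t)\}$ is admissible for $\overline{\la}^{\,k+m+1}$, so by definition $\om_{k+m+1}\le \sup \bM \le \om_k+\om_m+W+3\eta$; letting $\eta\to 0$ gives \eqref{eq1.1}. I expect the \emph{main obstacle} to be making the addition map $\zn\times\zn\times\zn\to\zn$, $(T_1,T_2,\Phi)\mapsto T_1+T_2+\Phi$, genuinely continuous in the $\bF$-metric on the relevant (possibly only weakly) admissible families and checking it respects the no-concentration-of-mass condition required in the definition of $\om_p$ (one may need to pass to the discrete/Almgren-Pitts formulation, or invoke the equivalence of the continuous and discrete definitions of $\om_p$ from \cite{mn_morse}), together with verifying the pullback formula for $\overline{\la}$ under addition — this is the only genuinely nontrivial topological ingredient and deserves a careful lemma.

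For Theorem~\ref{thm 1.2} the argument is formally identical: one works with the space $W^{1,2}(M)$ (or its projectivization) in place of $\zn$, the Allen-Cahn energy $E_\ve$ in place of mass, and uses that the addition of functions $u_1+u_2+u_3$ — after a suitable truncation/rescaling so that the sum still takes values essentially in $[-1,1]$ — satisfies $E_\ve(u_1+u_2+u_3)\lesssim E_\ve(u_1)+E_\ve(u_2)+E_\ve(u_3)$ with constant tending to $1$ (or, more cleanly, one uses the $\bbr\mathbb{P}^\infty$-structure on the space of functions and Gaspar-Guaraco's \cite{GG1} cohomological framework directly, where the energy is \emph{not} exactly subadditive but the construction from \cite{GG1}*{and related work} already contains the needed gluing with controlled energy loss). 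Taking $\ve\to 0^+$ and using the convergence $c_\ve(p)\to\ell_p$ and $\ga_\ve\to\ga$ established in \cite{GG1},\cite{GG2} yields the stated limiting inequality. The one point requiring care here is the energy subadditivity: unlike mass, $E_\ve$ has a potential term that is not subadditive under addition, so I would instead glue the profiles on \emph{disjoint} regions (possible after a small deformation since the supports of the ``transition layers'' can be made essentially disjoint, or by a partition-of-unity gluing as in \cite{GG1}), at the cost of an error $o_\ve(1)$ that vanishes in the limit — this is the Allen-Cahn analogue of the fact that disjointly-supported cycles add their masses exactly.
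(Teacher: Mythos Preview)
Your proposal has a genuine gap in the topological step, and a second gap in the Allen--Cahn energy estimate.

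\textbf{The multinomial coefficient problem is not fixable by your argument.} You correctly identify that under the addition map $\zn\times\zn\times\zn\to\zn$ the class $\overline{\la}$ pulls back to $a+b+c$ (with $a=p_1^*\overline{\la}$, etc.), and then you need $(a+b+c)^{k+m+1}\neq 0$. You try to rescue this by saying that cup-length constraints force a \emph{unique} surviving monomial $a^k b^m c$, ``so its coefficient is $1$ mod $2$.'' That reasoning is incorrect: uniqueness of the surviving monomial does not change its coefficient, which remains the multinomial $\binom{k+m+1}{k,m,1}=(k+m+1)!/(k!\,m!)$. This is often even. Concretely, take $k=m=1$: with $a^2=b^2=c^2=0$ one has $(a+b+c)^3=a^3+b^3+c^3+a^2b+a^2c+ab^2+b^2c+ac^2+bc^2=0$ over $\bbz_2$, and indeed $\binom{3}{1,1,1}=6\equiv 0$. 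So the product family $X_k\times X_m\times S^1$ with cycle-addition simply fails to be a $(k+m+1)$-sweepout in general.

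The paper avoids this entirely by \emph{not} using a product. It lifts each sweepout to the double cover ($\tp_1:\tx\to\cm$, $\tp_2:\ty\to\cm$), takes the \emph{join} $\tx*\ty$ with its diagonal free $\bbz_2$-action, and uses the one-parameter sweepout $\La$ as the \emph{join coordinate} rather than as a third factor: $\tps(x,y,t)=\bigl(\tp_1(x)\cap\La(t)\bigr)\cup\bigl(\tp_2(y)\cap(M\setminus\La(t))\bigr)$. The quotient $Z=(\tx*\ty)/\bbz_2$ then carries a class $\la$ with $\la^{k+m+1}\neq 0$, proved by an explicit singular-chain computation using the join structure and the transfer map --- no multinomial coefficients enter. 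This is the missing idea.

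\textbf{For Theorem \ref{thm 1.2}, the energy issue is also real.} Addition $u_1+u_2+u_3$ in $H^1(M)$ does not satisfy $E_\ve(u_1+u_2+u_3)\le E_\ve(u_1)+E_\ve(u_2)+E_\ve(u_3)$, and your ``disjoint transition layers'' fix cannot be made to work uniformly over a family (the layers move and collide). The paper instead uses a specific nonlinear interpolation map $\tht(u_1,u_2,v)$, built from $\min/\max$ combinations, which satisfies \emph{exactly} $E_\ve(\tht(u_1,u_2,v))\le E_\ve(u_1)+E_\ve(u_2)+E_\ve(v)$, is $\bbz_2$-equivariant in $(u_1,u_2)$, and reduces to $u_1$ when $v=\mathbf{1}$ and to $u_2$ when $v=-\mathbf{1}$. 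This is again precisely a join-type interpolation, with $w(t)$ playing the role of $\La(t)$, and the same join topology argument then shows $\tx*\ty\in\cC_{k+m+1}$.
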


\textbf{Acknowledgements.}  I am very grateful to my advisor Prof. Fernando Cod\'{a} Marques for many helpful discussions and for his constant support and guidance. The author is partially supported by NSF grant DMS-1811840.

\section{Notation and Preliminaries}
\subsection{Caccioppoli sets}
In this subsection, we will briefly recall the notion of the Caccioppoli set; further details can be found in \cite{sim} and \cite{afp}. An \(\cH^{n+1}\)-measurable set \(E \subset (M,g)\) is called a {\it Caccioppoli set} if \(D\ch_E\), the distributional derivative of the characteristic function \(\ch_E\), is a finite Radon measure on \(M\). This is equivalent to
\[\sup \left\{\int_E \operatorname{div} \om \; d\cH^{n+1}:\om \in \mathfrak{X}^1(M),\; \|\om\|_{\infty}\leq 1\right\}<\infty;\]
where \(\mathfrak{X}^1(M)\) denotes the space of \(C^1\) vector-fields on \(M\). Let us use the notation \(\cm\) to denote the space of all Caccioppoli sets in \(M\). If \(E\in \cm\), there exists an \(n\)-rectifiable set \(\del E\) \st the total variation measure \(|D\ch_E|=\cH^n \mres \del E\). Hence, for all \(\om \in \mathfrak{X}^1(M),\)
\[\int_E \operatorname{div} \om \; d\cH^{n+1}=\int_{\del E}\langle \om,\nu_E\rangle\;d\cH^n,\]
where \(\nu_E\) is a \(|D\ch_E|\)-measurable vector-field; \(\|\nu_E\|=1\) \(|D\ch_E|\)-a.e. The following proposition can be found in \cite{afp}. For the sake of completeness, we also include its proof (following \cite{afp}).
\begin{pro}[\cite{afp}*{Proposition 3.38}]\label{pr cac set}
Let \(E \subset M\) be \(\cH^{n+1}\)-measurable and \(U \subset M\) be an open set. Let
\[P(E,U)=\sup \left\{\int_E \operatorname{div} \om \; d\cH^{n+1}:\om \in \mathfrak{X}^1_c(U),\; \|\om\|_{\infty}\leq 1\right\};\]
where \(\mathfrak{X}^1_c(U)\) denotes the space of compactly supported \(C^1\) vector-fields on \(U\). If \(E,F \in \cm\),
\begin{equation}\label{e cac set}
P(E\cap F,U)+P(E\cup F, U)\leq P(E,U)+P(F,U).
\end{equation}
\hn if \(E,F \in \cm\), \(E\cap F\) and \(E \cup F\) also belong to \(\cm.\)
\end{pro}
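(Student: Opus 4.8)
The plan is to follow the mollification argument of \cite{afp}. If $P(E,U)=\infty$ or $P(F,U)=\infty$ there is nothing to prove, so I would first assume both are finite. Moreover, every competitor $\om$ in the definition of $P(\cdot,U)$ is compactly supported in $U$, hence $P(\cdot,U)=\sup\{P(\cdot,U'):U'\Subset U\}$, so it suffices to bound $P(E\cap F,U')+P(E\cup F,U')$ by $P(E,U)+P(F,U)$ for each fixed open $U'\Subset U$.

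The next step is to regularize. Using a mollification of $\ch_E,\ch_F$ (in local coordinates, or via the heat flow on $M$), I would produce smooth functions $u_j,v_j$ with values in $[0,1]$, defined on a neighbourhood of $\overline{U'}$ contained in $U$, such that $u_j\to\ch_E$ and $v_j\to\ch_F$ in $L^1(U')$ and
\[
\limsup_{j\to\infty}\int_{U'}\md{\na u_j}\leq P(E,U),\qquad \limsup_{j\to\infty}\int_{U'}\md{\na v_j}\leq P(F,U);
\]
the last two bounds come from $\md{\na(\ch_E\ast\rho_j)}\leq\md{D\ch_E}\ast\rho_j$ together with the fact that, for $j$ large, the mollified mass stays inside $U$. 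The computational heart of the argument is the pointwise identity, valid $\cH^{n+1}$-a.e.\ for any two Lipschitz functions,
\[
\md{\na\min(u,v)}+\md{\na\max(u,v)}=\md{\na u}+\md{\na v},
\]
which is immediate on the open sets $\{u<v\}$ and $\{u>v\}$ and follows on $\{u=v\}$ from $\na(u-v)=0$ a.e.\ there. Applying it to $(u_j,v_j)$ and integrating over $U'$,
\[
\int_{U'}\md{\na\min(u_j,v_j)}+\int_{U'}\md{\na\max(u_j,v_j)}=\int_{U'}\md{\na u_j}+\int_{U'}\md{\na v_j}.
\]

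To finish, I would pass to the limit. One has $\min(u_j,v_j)\to\min(\ch_E,\ch_F)=\ch_{E\cap F}$ and $\max(u_j,v_j)\to\ch_{E\cup F}$ in $L^1(U')$, and $P(\cdot,U')$ is lower semicontinuous under $L^1_{\mathrm{loc}}$-convergence — directly from its definition, since for each admissible $\om$ the map $A\mapsto\int_A\operatorname{div}\om$ is continuous in the $L^1$-norm of $\chi_A$ on $\operatorname{supp}\om$. Taking $\liminf_j$ of the last display (using superadditivity of $\liminf$ and $\liminf\leq\limsup$) gives $P(E\cap F,U')+P(E\cup F,U')\leq P(E,U)+P(F,U)$, and letting $U'\uparrow U$ yields \eqref{e cac set}. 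The closing assertion is then immediate: taking $U=M$, if $E,F\in\cm$ the right-hand side of \eqref{e cac set} is finite, hence so are $P(E\cap F,M)$ and $P(E\cup F,M)$, i.e.\ $E\cap F,E\cup F\in\cm$. I expect the only genuinely delicate point to be the regularization: checking that the mollified gradients do not create total variation in the limit and that the buffer $U'\Subset U$ accommodates the mollification scale; the limit passage and the $\min/\max$ identity are routine.
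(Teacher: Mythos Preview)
Your argument is correct and follows the same overall scheme as the paper --- approximate $\chi_E,\chi_F$ by smooth $[0,1]$-valued functions, control the gradients pointwise, and pass to the limit via lower semicontinuity --- but the algebraic core differs. The paper approximates $\chi_{E\cap F}$ and $\chi_{E\cup F}$ by the \emph{products} $f_ig_i$ and $f_i+g_i-f_ig_i$ and uses the triangle inequality $|\nabla(f_ig_i)|+|\nabla(f_i+g_i-f_ig_i)|\leq g_i|\nabla f_i|+f_i|\nabla g_i|+(1-g_i)|\nabla f_i|+(1-f_i)|\nabla g_i|=|\nabla f_i|+|\nabla g_i|$, whereas you use $\min$ and $\max$ and the a.e.\ identity $|\nabla\min(u,v)|+|\nabla\max(u,v)|=|\nabla u|+|\nabla v|$. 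Your identity is slightly sharper (an equality rather than an inequality) but requires the case-splitting argument on $\{u<v\},\{u>v\},\{u=v\}$; the product trick is purely algebraic and avoids any a.e.\ reasoning. The paper also sidesteps your buffer $U'\Subset U$ by invoking \cite{MPPP}*{Proposition 1.4}, which furnishes approximants $f_i$ directly on $U$ with $\int_U|\nabla f_i|\to P(E,U)$; your mollification route is more self-contained but costs the extra inner-approximation step. Both approaches are standard and equally valid.
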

\begin{proof}
\Sn \(E,F\in \cm\), by \cite{MPPP}*{Proposition 1.4}, \te \(\{f_i\}_{i=1}^{\infty},\{g_i\}_{i=1}^{\infty} \subset C^{\infty}(U)\) \st \(0\leq f_i,g_i\leq 1\) for all \(i\);
\begin{align*}
& f_i \ra \ch_E\big|_U \text{ in } L^1(U) \text{ and pointwise a.e.}; \quad P(E,U)=\lim_{i \ra \infty}\int_U |\na f_i|\; d\cH^{n+1};\\
& g_i \ra \ch_F\big|_U \text{ in } L^1(U) \text{ and pointwise a.e.}; \quad P(F,U)=\lim_{i \ra \infty}\int_U |\na g_i|\; d\cH^{n+1}.
\end{align*}
By the dominated convergence theorem, 
\[f_ig_i \ra \ch_{E \cap F}\big|_U \text{ in }L^1(U) \;\text{  and  }\; f_i+g_i-f_ig_i\ra \ch_{E \cup F}\big|_U \text{ in }L^1(U).\]
\tf
\begin{align*}
&P(E\cap F, U)+P(E\cup F, U)\\ &\leq \liminf_{i \ra \infty}\int_{U}\Big(\big|{\na(f_ig_i)}\big|+\big|{\na f_i +\na g_i - \na (f_ig_i)}\big|\Big)\; d\cH^{n+1}\\
&\leq \liminf_{i \ra \infty}\int_{U}\Big(g_i\md{\na f_i}+f_i\md{\na g_i}+(1-g_i)\md{\na f_i}+(1-f_i)\md{\na g_i}\Big)\; d\cH^{n+1}\\
&=P(E,U)+P(F,U).
\end{align*}
\end{proof}

\subsection{The space of hypercycles and the volume spectrum}\label{s2.2}
For \(l \in \bbn\), let \(\mathbf{I}_l(M^{n+1};\bbz_2)\) be the space of \(l\)-dimensional flat chains in \(M\) with coefficients in \(\bbz_2\). We will only need to consider \(l=n,n+1\). \(\cZ_n(M^{n+1};\bbz_2)\) denotes the space of flat chains \(T \in \mathbf{I}_n(M;\bbz_2)\) \st \(T =\del \Om\) for some \(\Om \in \mathbf{I}_{n+1}(M;\bbz_2)\). If \(T \in \mathbf{I}_n(M;\bbz_2)\), \(\md{T}\) stands for the varifold associated to \(T\) and \(\nm{T}\) is the Radon measure associated to \(\md{T}\). \(\cF\) and \(\bM\) denote the flat norm and the mass norm on \(\mathbf{I}_l(M;\bbz_2)\). When \(l=n+1\), these two norms coincide. We will always assume that the spaces \(\mathbf{I}_{n+1}(M;\bbz_2)\) and \(\zn\) are equipped with the \(\cF\) norm. We will also identify \(\cm\) with \(\mathbf{I}_{n+1}(M;\bbz_2)\), i.e. \(E \in \cm\) will be identified with \(\db{E}\), the current associated with \(E\). Similarly, \(\del E\) (with \(E \in \cm\)) will be identified with \(\db{\del E}=\del \db{E}\).

In \cite{mn_morse}, Marques and Neves proved that the space \(\cm\) is contractible and the boundary map \(\del:\cm\ra \zn\) is a double cover. Indeed, by the constancy theorem, for \(\Om_1,\Om_2\in \cm\), \(\del\Om_1=\del\Om_2\) if and only if either \(\Om_1=\Om_2\) or \(\Om_1=M-\Om_2.\) Thus \(\pi_1(\zn)=\bbz_2\). It was also proved in \cite{mn_morse} that \(\zn\) is weakly homotopy equivalent to \(\bbr\mathbb{P}^{\infty}\). The cohomology ring \(H^*(\zn,\bbz_2)\) is the polynomial ring \(\bbz_2[\overline{\la}]\) where \(\overline{\la}\) is the unique non-zero cohomology class in \(H^1(\zn,\bbz_2)\).

\(X\) is called a  cubical complex if \(X\) is a subcomplex of \([0,1]^N\) for some \(N\in \bbn\). By \cite{bp}*{Chapter 4}, every cubical complex is homeomorphic to a finite simplicial complex and vice-versa. In the present article, we choose to work with simplicial complexes.

Let \(X\) be a finite simplicial complex. \sps \(\Ph:X \ra \zn\) is a continuous map. \(\Ph\) is called a \textit{\(p\)-sweepout} if \(\Ph^*(\overline{\la}^p)\neq 0\) in \(H^p(X,\bbz_2)\). \(\Ph\) is said to have {\it no concentration of mass} if 
\[\lim_{r \ra 0^+}\sup\left\{\nm{\Ph(x)}(B(p,r)):x\in X,\;p\in M\right\}=0.\]
Let \(\cP_p\) denote the set of all \(p\)-sweepouts with no concentration of mass. The {\it volume spectrum} \(\{\om_p\}_{p=1}^{\infty}\) is defined by
\[\om_p=\inf_{\Ph\in\cP_p}\sup \left\{\bM(\Ph(x)):x\in \text{domain of }\Phi\right\}.\]

Let us also recall the definition of the one parameter Almgren-Pitts width, denoted by \(W\). Suppose \(\cS\) is the set of all \cts maps \(\La:[0,1]\ra \cm\) \st \(\La(0)=M\), \(\La(1)=\emptyset\) and \(\del\circ\La:[0,1]\ra\zn\) has no concentration of mass. We define
\[W=\inf_{\La\in\cS}\sup\left\{\bM(\del\La(t)):t \in [0,1]\right\}.\]

\subsection{The phase-transition spectrum}\label{s2.3}
Here we briefly recall the definition of the phase-transition spectrum, which was originally defined by Gaspar and Guaraco in \cite{GG1}. Let \(\mathfrak{p}:E\ra B\) be a universal \(\bbz_2\)-principal bundle. Then \(B\) is homotopy equivalent to \(\bbr\mathbb{P}^{\infty}\); hence \(H^*(B,\bbz_2)\) is isomorphic to the polynomial ring \(\bbz_2[\xi]\), where \(\xi\in H^1(B,\bbz_2)\). Suppose \(\tx\) is a finite simplicial complex with a free, simplicial \(\bbz_2\) action (i.e. \(\bbz_2\) acts on $\tx$ by simplicial homeomorphism) and \(X\) is the quotient of \(\tx\) by \(\bbz_2\). There exists a \cts \(\bbz_2\)-equivariant map \(\tilde{f}:\tx \ra E\), which is unique up to \(\bbz_2\)-homotopy (see \cite{Dieck}*{Chapter 14.4}). \(\tilde{f}\) descends to a map \(f:X \ra B\), i.e. if \(\pi:\tx \ra X\) is the projection map, \(f\circ \pi=\mathfrak{p}\circ \tilde{f}\). Let \(f^*:H^*(B, \bbz_2)\ra H^*(X, \bbz_2)\) be the map induced by \(f\). One defines
\[\text{Ind}_{\bbz_2}(\tx)=\sup\left\{p\in \bbn:f^*(\xi^{p-1})\neq 0 \in H^{p-1}(X, \bbz_2)\right\}.\]
\(\cC_p\) denotes the set of all finite simplicial complex \(\tx\), with free, simplicial \(\bbz_2\) action, \st \(\text{Ind}_{\bbz_2}(\tx)\geq p+1.\) 

Let us recall that 
\[H^1(M)=\left\{\vp \in L^2(M):\text{the distributional derivative }\na \vp\in L^2(M)\right\}.\]
\(H^1(M)\) is an infinite dimensional, separable Hilbert space; hence \(H^1(M)\setminus \{0\}\) is contractible. There is a free \(\bbz_2\) action on \(H^1(M)\setminus \{0\}\) given by \(u \mapsto -u\). Thus \(H^1(M)\setminus \{0\}\), equipped with this \(\bbz_2\) action, is the total space of a universal \(\bbz_2\)-principal bundle (see \cite{Dieck}*{14.4.12}). If \(\tx \) is a finite simplicial complex with free, simplicial \(\bbz_2\) action, the set of all \cts \(\bbz_2\)-equivariant maps \(\tx \ra H^1(M)\setminus \{0\}\) is denoted by \(\Ga(\tx)\).

For $\ve>0$, the \textit{\(\ve\)-Allen-Cahn functional} \(E_{\ve}:H^1(M)\ra [0,\infty)\) is defined by
\[E_{\ve}(u)=\int_M\ve\frac{\md{\na u}^2}{2}+\frac{W_*(u)}{\ve},\]
where \(W_*:\bbr \ra \bbr\) is a smooth, symmetric double-well potential. More precisely, \(W_*\) is bounded; \(W_*(t)=W_*(-t)\) for all \(t\); \(W_* \geq 0\) and \(W_*\) has precisely three critical points \(0,\pm 1\); \(W_*(\pm 1)=0\) and \(W_*''(\pm 1)>0\); \(0\) is a local maximum of \(W_*\). We note that \(E_{\ve}(u)=E_{\ve}(-u)\). The \textit{\(\ve\)-phase-transition spectrum} \(\{c_{\ve}(p)\}_{p=1}^{\infty}\) can be defined as follows (see \cite{GG1}*{Lemma 6.2}).
\[c_{\ve}(p)=\inf_{\tx\in \cC_p}\left(\inf_{h\in \Ga(\tx)}\sup_{x\in\tx}E_{\ve}(h(x))\right).\]
The {\it phase-transition spectrum} \(\{\ell_p\}_{p=1}^{\infty}\) is defined by 
\[\ell_p=\frac{1}{2\si}\lim_{\ve \ra 0^+}c_{\ve}(p),\]
where \(\si=\int_{-1}^{1}\sqrt{W(t)/2}\;dt.\) Combining the results in \cite{Guaraco}, \cite{GG1} and \cite{d2}, it follows that \(\ell_p=\om_p\) for all \(p\). Let us also recall the definition of the \textit{one parameter Allen-Cahn width \(\ga\)}, which was defined by Guaraco \cite{Guaraco}. Suppose
\[\Ga=\left\{h:[0,1]\ra H^1(M):h \text{ is continuous, }h(0)=\mathbf{1},\; h(1)=-\mathbf{1}\right\}\]
(\(\mathbf{1}\) denotes the constant function \(1\)). Then
\[\ga_{\ve}=\inf_{h\in \Ga}\sup_{t\in[0,1]}E_{\ve}(h(t));\quad \ga=\frac{1}{2\si}\lim_{\ve \ra 0^+}\ga_{\ve}.\]
Again, by the results in \cite{Guaraco} and \cite{d2}, \(\ga=W\).

\subsection{Join of two topological spaces}\label{s2.4}
Let \(A\) and \(B\) be two topological spaces. \textit{The join of \(A\) and \(B\)}, denoted by \(A*B\), is defined as follows.
\[A*B=\frac{A\times B \times [0,1]}{\sim},\]
where the equivalence relation `\(\sim\)' is such that
\[(a,b_1,0)\sim (a, b_2, 0)\;\; \forall\;b_1,b_2 \in B \quad \text{ and }\quad (a_1, b, 1) \sim (a_2, b,1)\;\; \forall \; a_1, a_2 \in A,\]
i.e. $A\times B \times \{0\}$ is collapsed to $A$ and $A\times B \times \{1\}$ is collapsed to $B$. If either \(A\) or \(B\) is path-connected, using van Kampen’s Theorem, one can show that \(A*B\) is simply connected.

By the abuse of notation, an element of \(A*B\) will be denoted by a triple \((a,b,t)\) with \(a\in A,\;b \in B,\; t\in [0,1]\). If \(f:A \ra C\) and \(g:B \ra D\) are \cts maps, one can define a \cts map \(f*g:A*B \ra C*D\) as follows.
\[(f*g)(a,b,t)=\left(f(a),g(b),t\right); \quad a \in A,\;b\in B,\; t \in [0,1].\]

Let us recall that the standard \(d\)-simplex \(\De^d\) is defined as follows.
\[\De^d=\{x=(x_1,x_2,\dots,x_{d+1})\in \bbr^{d+1}:x_1+x_2+\dots+x_{d+1}=1\;\text{ and }x_i\geq 0 \;\;\forall i\}.\]
\(\De^p*\De^q\) can be identified with the simplex \(\De^{p+q+1}\). Indeed, \(F:\De^p*\De^q\ra \De^{p+q+1}\), defined by
\[F(u,v,t)=((1-t)u,tv)\;(\in \bbr^{p+q+2}),\]
is a homeomorphism. \hn join of two simplicial complexes is again a simplicial complex. 

If \(\mathcal{X}\) is a topological space, let \(C_d(\mathcal{X},\bbz_2)\) denote the abelian group of singular \(d\)-chains in \(\mathcal{X}\) with coefficients in \(\bbz_2.\) For \(p,q\in \bbn_0\), we define a bilinear map \(\sF:C_p(A, \bbz_2)\times C_q(B, \bbz_2)\ra C_{p+q+1}(A*B, \bbz_2)\) as follows. If \(\ph:\De^p \ra A\) and \(\ps:\De^q \ra B\) are \cts maps,
\[\sF(\ph,\ps)=(\ph*\ps)\circ \left(F^{-1}\right).\]
For convenience, if \((\ph,\ps) \in C_p(A, \bbz_2)\times C_q(B, \bbz_2) \), let us denote \(\sF(\ph,\ps)\) simply by \(\ph*\ps\). The following identity can be verified by explicit computation.
\begin{equation}\label{e join of sing simpl}
\del(\ph*\ps)=(\del\ph)*\ps+\ph*(\del\ps).
\end{equation}

\section{Proof of Theorem \ref{thm 1.1}}
If \(k=m=0\), \eqref{eq1.1} reduces to \(\om_1\leq W\), which holds by the definitions of \(\om_1\) and \(W\). So let us assume that \(k\geq 1.\) The proof of Theorem \ref{thm 1.1} is divided into four parts.

\textbf{Part 1.} Let us fix \(\de>0.\) We choose a \(k\)-sweepout \(\Ph_1:X \ra \zn\), with no concentration of mass, \st \(X\) is connected and
\begin{equation}\label{e mass of Phi_1}
\sup_{x\in X}\{\bM(\Ph_1(x))\}\leq \om_k+\de.
\end{equation}
Following the argument of Zhou \cite{zhou2}, \(\Ph_1\) is a \(k\)-sweepout implies that 
\[\Ph_1^*:H^1(\zn,\bbz_2)\ra H^1(X, \bbz_2)\]
is non-zero. Hence,
\[(\Ph_1)_*:\pi_1(X)\ra \pi_1(\zn)(=\bbz_2)\]
is onto. Thus \(\ker (\Ph_1)_*\) is an index \(2\) subgroup of \(\pi_1(X)\). By \cite{hat}*{Proposition 1.36}, there exists a double cover \(\rho_1:\tilde{X} \ra X\) \st \(\tilde{X}\) is connected and if 
\[(\rh_1)_*:\pi_1(\tx)\ra \pi_1(X),\]
then \(\operatorname{im}(\rh_1)_*=\ker (\Ph_1)_*\). Using \cite{hat}*{Proposition 1.33}, \(\Ph_1\) has a lift \(\tilde{\Ph}_1:\tilde{X}\ra \cm\) \st \(\del \circ \tilde{\Ph}_1=\Ph_1 \circ \rho_1.\) \(\tilde{\Ph}_1\) is \(\bbz_2\)-equivariant, i.e. if \(T_1:\tilde{X} \ra \tilde{X}\) is the deck transformation,
\[\tilde{\Ph}_1(T_1(x))=M-\tilde{\Ph}_1(x) \;\; \forall\; x\in \tilde{X}.\]
Similarly, for \(m\geq 1\), we can choose an \(m\)-sweepout \(\Ph_2:Y \ra \zn\) with no concentration of mass \st \(Y\) is connected and
\begin{equation}\label{e mass of Phi_2}
\sup_{y\in Y}\{\bM(\Ph_2(y))\}\leq \om_m+\de.
\end{equation}
There exists a double cover \(\rho_2:\tilde{Y} \ra Y\) \st \(\tilde{Y}\) is connected and \(\Ph_2\) has a lift \(\tilde{\Ph}_2:\tilde{Y}\ra \cm\) \st \(\del \circ \tilde{\Ph}_2=\Ph_2 \circ \rho_2.\) If \(T_2:\tilde{Y} \ra \tilde{Y}\) is the deck transformation, then
\[\tilde{\Ph}_2(T_2(y))=M-\tilde{\Ph}_2(y) \;\; \forall\; y\in \tilde{Y}.\]
On the other hand, if \(m=0\), we simply choose \(Y\) to be a singleton set \(\{\bullet\}\) and \(\Ph_2(\bullet)\) to be the zero cycle. Consequently, in this case, \(\ty=\{-1,1\}\) with \(\tp_2(-1)=\emptyset\) and \(\tp_1(1)=M\). We also choose \(\La:[0,1] \ra \cm \) \st \(\La(0)=M\), \(\La(1)=\emptyset\), \(\del \circ \La:[0,1]\ra \zn\) has no concentration of mass and
\begin{equation}\label{e mass of del Laambda}
\sup_{t \in [0,1]}\{\bM(\del\La(t))\}\leq W+\de.
\end{equation}

{\bf Part 2.} Let us consider \(\tx*\ty\). As remarked in Section \ref{s2.4}, \(\tx*\ty\) is simply-connected. There exists a free \(\bbz_2\) action on \(\tx*\ty\) given by 
\begin{equation}\label{e.Z_2.action}
(x,y,t)\mapsto(T_1(x),T_2(y),t).
\end{equation}
\(\tx\) and \(\ty\) can be equipped with simplicial complex structures \st the maps \(T_1:\tx\ra \tx\) and \(T_2:\ty\ra \ty\) are simplicial homeomorphisms. Let \(\{\De_{1,i}:i=1,\dots,I\}\) and \(\{\De_{2,j}:j=1,\dots,J\}\) be the set of simplices in \(\tx\) and \(\ty\). As discussed in Section \ref{s2.4}, we can put a canonical simplicial complex structure on \(\tx*\ty\), whose simplices are \(\{\De_{1,i},\;\De_{2,j},\;\De_{1,i}*\De_{2,j}:i=1,\dots,I;\;j=1,\dots,J\}\), \st the above action of \(\bbz_2\) on \(\tx*\ty\) (\eqref{e.Z_2.action}) is a simplicial action.

One can define a \(\bbz_2\)-equivariant map \(\tilde{\Ps}:\tx*\ty\ra \cm\) as follows.
\begin{equation}\label{e def of Psi tilde}
\tilde{\Ps}(x,y,t)=\Big(\tp_1(x) \cap \La(t)\Big) \bigcup \Big(\tp_2(y) \cap (M-\La(t))\Big).
\end{equation}
This definition is motivated by the work of Marques and Neves \cite{mn_morse}*{Proof of Claim 5.3}, where they proved that \(\cm\) is contractible. By Proposition \ref{pr cac set}, R.H.S. of \eqref{e def of Psi tilde} indeed belongs to \(\cm\). Moreover, \(\tps\) is well-defined on \(\tx*\ty\) as \(\La(0)=M\) and \(\La(1)=\emptyset\).
\begin{clm}\label{cl ctn in flat}
The map \(\tilde{\Ps}:\tx*\ty\ra \cm\), defined above, is \cts in the flat topology.
\end{clm}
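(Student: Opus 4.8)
The plan is to verify continuity of $\tilde{\Ps}$ at an arbitrary point $(x_0,y_0,t_0)\in\tx*\ty$ by estimating the flat distance $\cF\big(\tilde{\Ps}(x,y,t),\tilde{\Ps}(x_0,y_0,t_0)\big)$ when $(x,y,t)$ is close to $(x_0,y_0,t_0)$. Since $\cF$ on $\mathbf{I}_{n+1}(M;\bbz_2)=\cm$ is just the $L^1$-distance $\md{E\,\triangle\,E'}$ between the corresponding sets (by the identification recalled in Section \ref{s2.2}), the right framework is to work with the symmetric-difference metric throughout. First I would reduce to proving separate continuity in each of the three "blocks" using the elementary set-theoretic inclusion
\[
\big(A_1\cap B_1\big)\triangle\big(A_2\cap B_2\big)\ \subseteq\ \big(A_1\triangle A_2\big)\cup\big(B_1\triangle B_2\big),
\]
and its counterpart for unions, so that
\[
\tilde{\Ps}(x,y,t)\ \triangle\ \tilde{\Ps}(x_0,y_0,t_0)\ \subseteq\ \big(\tp_1(x)\triangle\tp_1(x_0)\big)\ \cup\ \big(\tp_2(y)\triangle\tp_2(y_0)\big)\ \cup\ \big(\La(t)\triangle\La(t_0)\big).
\]
Taking $\cH^{n+1}$-measure and using $\cH^{n+1}(A\triangle B)\le \cF(A,B)+\cF(B,A)$ — or more directly that $\cF$ \emph{equals} the measure of the symmetric difference on $(n{+}1)$-currents — this bounds $\cF\big(\tilde{\Ps}(x,y,t),\tilde{\Ps}(x_0,y_0,t_0)\big)$ by the sum of $\cF\big(\tp_1(x),\tp_1(x_0)\big)$, $\cF\big(\tp_2(y),\tp_2(y_0)\big)$, and $\cF\big(\La(t),\La(t_0)\big)$, each of which tends to $0$ by continuity of $\tp_1$, $\tp_2$, and $\La$ in the flat topology.

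The one genuine subtlety is that $\tilde{\Ps}$ is defined on the \emph{quotient} join $\tx*\ty$, so I must check that the representative triples behave well near the collapsed faces $\tx\times\ty\times\{0\}$ and $\tx\times\ty\times\{1\}$, where the coordinate $x$ (resp.\ $y$) is not well-defined. Near $t_0=0$: for $t$ small the term $\tp_2(y)\cap(M-\La(t))$ has measure $\le\cH^{n+1}(M-\La(t))=\cH^{n+1}(M)-\cH^{n+1}(\La(t))$, which goes to $0$ as $t\to 0$ because $\La(t)\to\La(0)=M$ in the flat (hence $L^1$) sense; meanwhile $\tp_1(x)\cap\La(t)$ differs from $\tp_1(x_0)\cap\La(0)=\tp_1(x_0)$ by a set of measure controlled by $\cF(\La(t),M)$ plus $\cF(\tp_1(x),\tp_1(x_0))$, and the latter is fine since near $t=0$ the point of $\tx*\ty$ genuinely remembers $x$. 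Symmetrically at $t_0=1$ the $\tp_1$-term vanishes in the limit while the $\tp_2$-term is controlled. Thus the estimate above, reorganized so that whichever of the $x$- or $y$-coordinates is "about to be collapsed" is handled by the measure-of-$M\setminus\La(t)$ or measure-of-$\La(t)$ bound rather than by continuity of $\tp_1$ or $\tp_2$, still closes.

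I would organize the write-up as: (i) record the two symmetric-difference inclusions for $\cap$ and $\cup$; (ii) state that $\cF$-distance on $\cm$ is the $L^1$-norm of the difference of characteristic functions, so $\cF\big(\tilde\Ps(p),\tilde\Ps(p_0)\big)\le \cH^{n+1}\big(\tilde\Ps(p)\triangle\tilde\Ps(p_0)\big)$; (iii) do the interior case $t_0\in(0,1)$ directly from the displayed three-term bound; (iv) do the boundary cases $t_0=0$ and $t_0=1$ with the measure bound on $\La(t)\triangle M$ (resp.\ $\La(t)$) absorbing the ill-defined coordinate. The main obstacle — really the only place requiring care rather than bookkeeping — is step (iv): making precise that "near the collapsed end the ill-defined join coordinate contributes a set of vanishing measure", which hinges on $\La(0)=M$, $\La(1)=\emptyset$ together with flat-continuity of $\La$ at the endpoints. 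Everything else is the standard fact that intersection and union of Caccioppoli sets are $L^1$-continuous operations (Proposition \ref{pr cac set} guarantees they land in $\cm$), combined with the flat-continuity of the three given maps $\tp_1,\tp_2,\La$.
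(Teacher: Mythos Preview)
Your proposal is correct and takes essentially the same approach as the paper: both reduce to the symmetric-difference inclusions for $\cap$ and $\cup$, together with the flat-continuity of $\tp_1,\tp_2,\La$. The only difference is presentational—the paper proves once that $(E,F)\mapsto E\cap F$ and $(E,F)\mapsto E\cup F$ are $\cF$-continuous on $\cm\times\cm$ and lets the already-noted well-definedness of $\tilde\Ps$ on the join take care of the descent to the quotient, whereas you treat the collapsed faces $t_0\in\{0,1\}$ by an explicit measure argument.
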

\begin{proof}
\Sn \(\tp_1\), \(\tp_2\) and \(\La\) are \cts in the flat topology, it is enough to prove that the maps \(\mu_1,\mu_2:\cm \times \cm \ra \cm\) defined by
\[\mu_1(E,F)=E\cap F\quad \text{  and  }\quad \mu_2(E,F)=E \cup F\]
are continuous. For arbitrary sets \(A_1,\;A_2,\;B_1,\;B_2\),
\[(A_1 \cap A_2) \; \De \; (B_1 \cap B_2)\subset (A_1\;\De\;B_1)\cup (A_2\;\De\;B_2),\]
and
\[(A_1 \cup A_2) \; \De \; (B_1 \cup B_2)\subset (A_1\;\De\;B_1)\cup (A_2\;\De\;B_2).\]
Here \(A \; \De\;B=(A\setminus B)\cup (B \setminus A)\) is the symmetric difference of \(A\) and \(B\). \tf
\begin{align*}
&\cF(\mu_1(E,F)-\mu_1(E',F'))\leq \cF(E-E')+\cF(F-F'),\\
&\cF(\mu_2(E,F)-\mu_2(E',F'))\leq \cF(E-E')+\cF(F-F'),
\end{align*}
which proves the continuity of \(\mu_1\) and \(\mu_2.\)
\end{proof}
Let \(Z\) denote the quotient of \(\tx*\ty\) under the above mentioned \(\bbz_2\) action \eqref{e.Z_2.action} (\(Z\) is a simplicial complex as the \(\bbz_2\) action on \(\tx*\ty\) is simplicial) and \(\rho:\tx*\ty \ra Z\) be the covering map. \Sn \(\tilde{\Ps}:\tx*\ty\ra \cm\) is \(\bbz_2\)-equivariant, \(\tilde{\Ps}\) descends to a \cts map \(\Ps:Z \ra \zn\), i.e. \(\del\circ \tilde{\Ps}=\Ps\circ\rho.\) If \(E \in \cm\), \(P(E,U)=\|\del E\|(U)\). \Sn \(\Ph_1,\;\Ph_2,\;\del\circ\La\) have no concentration of mass, from Proposition \ref{pr cac set}, it follows that \(\Ps\) also has no concentration of mass.

{\bf Part 3.} We will show that \(\Ps:Z \ra \zn\) is a \((k+m+1)\)-sweepout. \Sn \(\tx*\ty\) is simply-connected and \(Z\) is the quotient of \(\tx*\ty\) by \(\bbz_2\), \(\pi_1(Z)=\bbz_2\); hence \(H^1(Z,\bbz_2)=\bbz_2.\) Let \(\la\) be the unique non-zero element of \(H^1(Z,\bbz_2)\) so that
\begin{equation}\label{e labmda.c}
\la.[\ka]=1
\end{equation}
for all non-contractible loop \(\ka:S^1\ra Z.\)

As discussed in Section \ref{s2.4}, \(\tx\) naturally sits inside \(\tx*\ty\), via the map \(x \mapsto (x,\overline{y},0)\) and \(\ty\) naturally sits inside \(\tx*\ty\), via the map \(y \mapsto (\overline{x},y,1)\). \hn \(X\) and \(Y\) can be naturally identified as subspaces of \(Z\); let \(\iota_1:X \hookrightarrow Z\) and \(\iota_2:Y \hookrightarrow Z\) be the inclusion maps. By the definition of \(\tps\) in \eqref{e def of Psi tilde},
\begin{equation}\label{e res of Psi}
\Psi\big|_X=\Ph_1 \quad \text{  and  }\quad \Ps\big|_Y=\Ph_2 .
\end{equation}
Thus, recalling the notation from Section \ref{s2.2} that \(H^*(\zn,\bbz_2)=\bbz_2[\overline{\la}]\),
\begin{equation}\label{e pullback by iota}
\Ps^*\overline{\la}\;\big|_X=\Ph_1^*\overline{\la}\quad \text{  and  }\quad \Ps^*\overline{\la}\;\big|_Y=\Ph_2^*\overline{\la}.
\end{equation}
In particular, this implies $\Ps^*\overline{\la}$ is non-zero and hence $\Ps^*\overline{\la}=\la$. \tf to prove that \(\Ps:Z \ra \zn\) is a \((k+m+1)\)-sweepout, it is enough to find \(c \in H_{k+m+1}(Z,\bbz_2)\) \st \(\la^{k+m+1}.c=1.\)

\Sn \(\Ph_1\) is a \(k\)-sweepout, there exists \(\al\in H_k(X,\bbz_2)\) \st
\begin{equation}\label{e lambda.alpha}
\left(\Ph_1^*\overline{\la}\right)^k.\al=1.
\end{equation}
Let \(\al=\left[\sum_{i=1}^I\al_i\right],\) where each \(\al_i:\De^k\ra X\) is a singular \(k\)-simplex and
\begin{equation}\label{e del alplha}
\del \Big(\sum_i\al_i\Big)=0.
\end{equation}
Similarly, as \(\Ph_2\) is an \(m\)-sweepout, there exists \(\be\in H_m(Y,\bbz_2)\) \st
\begin{equation}\label{e lambda.beta}
\left(\Ph_2^*\overline{\la}\right)^m.\be=1.
\end{equation}
Let \(\be=\left[\sum_{j=1}^J\be_j\right],\) where each \(\be_j:\De^m\ra Y\) is a singular \(m\)-simplex and
\begin{equation}\label{e del beta}
\del \Big(\sum_j\be_j\Big)=0.
\end{equation}

Let \(\tau:C_k(X,\bbz_2)\ra C_k(\tx, \bbz_2)\) be the transfer homomorphism defined as follows (see \cite{hat}*{Proof of Proposition 2B.6}). If \(\et:\De^k \ra X\) is a \cts map, \(\et\) has exactly two lifts \(\tilde{\et}_1,\tilde{\et}_2:\De^k\ra \tx\). One defines \(\ta(\et)=\tilde{\et}_1+\tilde{\et}_2\). Let \(\tilde{\al}_{i,1},\;\tilde{\al}_{i,2}:\De^k \ra \tx\) be the lifts of \(\al_i\) so that \(\ta(\al_i)=\tilde{\al}_{i,1}+\tilde{\al}_{i,2}\). We also choose \(\tilde{\be}_j:\De^m\ra \ty\), which is a lift of \(\be_j\).
\begin{clm}\label{clm del c}
\begin{equation}\label{e del c 0}
\del \rh_{\#}\Big(\sum_{i,j}\ta(\al_i)*\tilde{\be}_j\Big)=0\; \text{ in }C_{k+m}(Z,\bbz_2).
\end{equation}
\end{clm}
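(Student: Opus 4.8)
The plan is to compute $\del \rh_{\#}\big(\sum_{i,j}\ta(\al_i)*\tilde{\be}_j\big)$ directly using the join boundary formula \eqref{e join of sing simpl} and the fact that $\rh$ is a chain map. First I would apply $\rh_{\#}$-naturality to reduce to computing $\del\big(\sum_{i,j}\ta(\al_i)*\tilde{\be}_j\big)$ in $C_{k+m}(\tx*\ty,\bbz_2)$ and then pushing forward. By bilinearity of $*$ and \eqref{e join of sing simpl}, the boundary splits as
\[
\del\Big(\sum_{i,j}\ta(\al_i)*\tilde{\be}_j\Big)=\sum_{i,j}\big(\del\ta(\al_i)\big)*\tilde{\be}_j+\sum_{i,j}\ta(\al_i)*\big(\del\tilde{\be}_j\big).
\]
For the first sum I would use that the transfer homomorphism $\ta$ is a chain map (this is part of the construction in \cite{hat}*{Proof of Proposition 2B.6}), so $\del\ta(\al_i)=\ta(\del\al_i)$, and therefore $\sum_i\del\ta(\al_i)=\ta\big(\del\sum_i\al_i\big)=\ta(0)=0$ by \eqref{e del alplha}; since the inner $\tilde{\be}_j$ factor is fixed, the whole first sum vanishes. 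For the second sum, I would note that $\sum_j\tilde{\be}_j$ is a lift of $\sum_j\be_j$ and $\rh_2:\ty\to Y$ is a covering, so the chain-level identity $\del\sum_j\tilde{\be}_j$ is a lift of $\del\sum_j\be_j=0$; since the only lift of the zero chain along a covering-induced chain map is zero, $\sum_j\del\tilde{\be}_j=0$, and again with the outer $\ta(\al_i)$ factors fixed the second sum vanishes. Hence the total boundary is $0$ already in $C_{k+m}(\tx*\ty,\bbz_2)$, and applying the chain map $\rh_{\#}$ gives \eqref{e del c 0}.

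The one point that requires a little care — and what I expect to be the main obstacle — is justifying that $\del\tilde{\be}_j$ really assembles to the zero chain rather than merely to something projecting to zero. The subtlety is that $\del\tilde{\be}_j$ for an individual $j$ need not vanish; only the sum over $j$ does. To make this rigorous I would argue as follows: the map $C_*(\ty,\bbz_2)\to C_*(Y,\bbz_2)$ induced by $\rh_2$ is a chain map, and $\rh_{2\#}\big(\del\sum_j\tilde{\be}_j\big)=\del\rh_{2\#}\sum_j\tilde{\be}_j=\del\sum_j\be_j=0$ by \eqref{e del beta}. Now $\del\sum_j\tilde{\be}_j$ is a finite $\bbz_2$-linear combination of singular $(m-1)$-simplices in $\ty$, and distinct simplices of $\ty$ lying over a common simplex of $Y$ are distinct (the fiber has two points). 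Thus if a nonzero $\bbz_2$-combination of simplices in $\ty$ pushed forward to zero, the simplices would have to cancel in pairs over each simplex of $Y$; but one can arrange (or simply observe, using that $\ty\to Y$ is a $\bbz_2$-cover and the chain $\sum_j\be_j$ consists of the chosen simplices $\be_j$ each lifted by the single chosen lift $\tilde\be_j$) that the relevant cancellation already happens upstairs. Concretely, since each $\be_j$ has a well-defined chosen lift $\tilde\be_j$ and $\del$ commutes with "take the chosen lift" on the nose in $C_*(\ty,\bbz_2)$ — because the restriction of $\tilde\be_j$ to a face is the unique lift of the corresponding face of $\be_j$ agreeing with $\tilde\be_j$ on that face — the identity $\del\sum_j\tilde\be_j = \widetilde{\del\sum_j\be_j} = \tilde 0 = 0$ holds literally. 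An entirely parallel (and in fact cleaner, since $\ta$ is manifestly a chain map) argument handles the $\ta(\al_i)$ factors. Assembling these observations into the two displayed vanishing statements completes the proof of the claim.
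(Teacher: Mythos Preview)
There is a genuine gap in your treatment of the second sum. Your claim that $\sum_j\del\tilde\be_j=0$ already in $C_{m-1}(\ty,\bbz_2)$ is false in general, and the justification you offer --- that ``$\del$ commutes with `take the chosen lift' on the nose'' --- does not hold. The point is that the faces of two different $\tilde\be_j$'s lying over a common face in $Y$ need not be the same lift. Concretely, take $Y=S^1$ with the nontrivial double cover $\ty=S^1$, and let $\be$ consist of a single singular $1$-simplex $\gamma:[0,1]\to Y$ that traces out the circle once, so $\del\gamma=0$ in $C_0(Y,\bbz_2)$. Its lift $\tilde\gamma$ is a path joining the two distinct preimages $\tilde p_1,\tilde p_2$ of the basepoint, so $\del\tilde\gamma=\tilde p_1+\tilde p_2\neq 0$. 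All one can say is that $\rh_{2\#}\big(\sum_j\del\tilde\be_j\big)=0$, which forces $\sum_j\del\tilde\be_j$ to be a sum of $T_2$-orbit pairs $\tht_{r,1}+\tht_{r,2}$ with $\tht_{r,2}=T_2\circ\tht_{r,1}$; it does \emph{not} force the sum to vanish.

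This is exactly where the transfer on the $\al$-side earns its keep, and it is the step the paper carries out. Since $\ta(\al_i)=\tilde\al_{i,1}+\tilde\al_{i,2}$ contains \emph{both} lifts, expanding
\[
\sum_{i,j}\ta(\al_i)*\del\tilde\be_j=\sum_{i,r}\big(\tilde\al_{i,1}*\tht_{r,1}+\tilde\al_{i,2}*\tht_{r,1}+\tilde\al_{i,1}*\tht_{r,2}+\tilde\al_{i,2}*\tht_{r,2}\big)
\]
and then applying $\rh_\#$ pairs the four terms via $\rho\circ(\tilde\al_{i,1}*\tht_{r,1})=\rho\circ(\tilde\al_{i,2}*\tht_{r,2})$ and $\rho\circ(\tilde\al_{i,2}*\tht_{r,1})=\rho\circ(\tilde\al_{i,1}*\tht_{r,2})$, giving zero in $C_{k+m}(Z,\bbz_2)$. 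So the second sum vanishes only \emph{after} pushing down to $Z$, not upstairs. Your handling of the first sum is fine; the repair needed is precisely this pairing argument for the second sum.
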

\begin{proof}
By \eqref{e join of sing simpl},
\begin{equation}\label{e del c 1}
\del\Big(\sum_{i,j}\ta(\al_i)*\tilde{\be}_j\Big)=\sum_{i,j}\del\ta(\al_i)*\tilde{\be}_j+\sum_{i,j}\ta(\al_i)*\del\tilde{\be}_j.
\end{equation}
Since \(\ta\) is a chain map (i.e. \(\ta\) commutes with \(\del\)), \eqref{e del alplha} implies
\begin{equation}\label{e del c 2}
\sum_{i}\del\ta(\al_i)=0.
\end{equation}
Further, by \eqref{e del beta},
\[\rh_{\#}\Big(\sum_j\del \tilde{\be}_j\Big)=0.\]
\hn there exist singular \((m-1)\)-simplices \(\tht_{r,1},\;\tht_{r,2}:\De^{m-1}\ra \ty\) \st \(\tht_{r,2}=T_2\circ \tht_{r,1}\) and
\[\sum_j\del \tilde{\be}_j=\sum_{r=1}^R\tht_{r,1}+\tht_{r,2}.\]
Thus
\begin{equation}\label{e del c 3}
\sum_{i,j}\ta(\al_i)*\del\tilde{\be}_j=\sum_{i,r}\tilde{\al}_{i,1}*\tht_{r,1}+\tilde{\al}_{i,2}*\tht_{r,1}+\tilde{\al}_{i,1}*\tht_{r,2}+\tilde{\al}_{i,2}*\tht_{r,2}.
\end{equation}
However,
\begin{equation}\label{e del c 4}
\rho\circ(\tilde{\al}_{i,1}*\tht_{r,1})=\rh\circ(\tilde{\al}_{i,2}*\tht_{r,2}) \quad \text{ and }\quad \rho\circ(\tilde{\al}_{i,2}*\tht_{r,1})=\rh\circ(\tilde{\al}_{i,1}*\tht_{r,2}).
\end{equation}
\tf combining \eqref{e del c 1}, \eqref{e del c 2}, \eqref{e del c 3} and \eqref{e del c 4}, we obtain \eqref{e del c 0}.
\end{proof}

Claim \ref{clm del c} implies that 
\begin{equation}\label{e def c}
c=\bigg[\rh_{\#}\Big(\sum_{i,j}\ta(\al_i)*\tilde{\be}_j\Big)\bigg] 
\end{equation}
is a well-defined homology class in \(H_{k+m+1}(Z,\bbz_2).\)
\begin{clm}
\(\la^{k+m+1}.c=1\), where \(c\) is as defined in \eqref{e def c}.
\end{clm}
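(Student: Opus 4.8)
The plan is to compute the cap product $\la^{k+m+1}.c$ by peeling off the factor $\la$ coming from the ``interval direction'' of the join, reducing to the known facts \eqref{e lambda.alpha} and \eqref{e lambda.beta} about $\al$ and $\be$. The key point is that $Z$ is a union of three pieces: the copy of $X$ sitting at the $t=0$ end of the join, the copy of $Y$ sitting at the $t=1$ end, and the ``bulk'' $\rho(\tx*\ty)$ interpolating between them; and the class $c$ is represented by the join-chain $\rho_\#\big(\sum_{i,j}\ta(\al_i)*\tilde\be_j\big)$, which restricts to (a multiple of) $\al$ near $t=0$ and to $\be$ near $t=1$. I would first establish the algebraic identity $\la^{k+1}.(\ta(\al_i)*\tilde\be_j) = (\text{something supported on }Y)$ at the chain level, or more precisely work with the intersection-pairing interpretation: cap product with $\la$ corresponds, up to sign (irrelevant mod $2$), to intersecting with a codimension-one cycle Poincaré-dual to $\la$.

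Concretely, here is the order of steps. First, recall from Section~\ref{s2.4} that under the homeomorphism $F:\De^p*\De^q\ra\De^{p+q+1}$, a join cell $\De_{1,i}*\De_{2,j}$ is a $(k+m+1)$-simplex whose ``$t=\tfrac12$ slice'' is naturally $\De_{1,i}\times\De_{2,j}$. The class $\la\in H^1(Z,\bbz_2)$ is the pullback $\Ps^*\overline{\la}$, but for computing $\la^{k+m+1}.c$ it is cleaner to use that $\la$ is dual to the mod-$2$ hypersurface $\rho(\{t=\tfrac12\})\subset Z$: since the $\bbz_2$ action preserves the $t$-coordinate and acts freely on $\tx*\ty$, the quotient of $\{t=\tfrac12\}=\tx\times\ty$ is a genuine codimension-one subcomplex of $Z$ representing the Poincaré dual of $\la$ (one checks $\la.[\ka]=1$ for a loop $\ka$ crossing this slice once, matching \eqref{e labmda.c}). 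Second, I would show $\la^{k+m+1}\cap c$ is represented by intersecting the chain $\rho_\#\big(\sum_{i,j}\ta(\al_i)*\tilde\be_j\big)$ with an appropriate product of $k+m+1$ such dual hypersurfaces in general position; by the join structure, $k$ of these cut the $\tx$-factor, $m$ cut the $\ty$-factor, and the remaining one cuts the interval factor. Third, I would invoke \eqref{e lambda.alpha}: the $k$-fold intersection of $\sum_i\ta(\al_i)$ (equivalently $\sum_i\al_i$ downstairs, after applying $\rho_\#$ and using that $\rho_\#\tau=2\cdot\mathrm{id}=0$... — so one must be careful here and instead intersect \emph{before} pushing down, using the transfer argument as in \cite{hat}*{Proof of Proposition 2B.6}) picks out an odd count of points. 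Similarly \eqref{e lambda.beta} handles the $\ty$-factor. The leftover interval intersection contributes the factor $1$ because the join chain sweeps the interval direction exactly once. Multiplying, $\la^{k+m+1}.c=1\cdot1\cdot1=1 \pmod 2$.

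The main obstacle is bookkeeping the transfer homomorphism correctly. Since $\rho_\#\circ\ta = 0$ on chains (a lift plus its translate maps to twice a chain, hence $0$ mod $2$), one cannot simply push $\ta(\al_i)$ down and use \eqref{e lambda.alpha} naively; the point of forming $\ta(\al_i)*\tilde\be_j$ rather than $\al_i * \be_j$ is that the \emph{join} breaks the symmetry — $\rho$ identifies $\tilde\al_{i,1}*\tht_{r,1}$ with $\tilde\al_{i,2}*\tht_{r,2}$ but \emph{not} $\tilde\al_{i,1}*\tilde\be_j$ with $\tilde\al_{i,2}*\tilde\be_j$ in general — so that $c$ is a nonzero class even though its ``boundary pieces'' cancel in pairs (Claim~\ref{clm del c}). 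I expect the cleanest rigorous route is: restrict the intersection computation to the complement of the two endpoint copies $X,Y\subset Z$, i.e. to $\rho((\tx*\ty)\setminus(\tx\sqcup\ty))\cong \big((\tx\times\ty\times(0,1))\big)/\bbz_2$, which is a twisted product; over this set $\la$ restricts to the generator pulled back from the $(0,1)/\bbz_2$-type factor, the chain $c$ restricts to a cycle representing $[\,\al\,]\times[\,\be\,]$ twisted by the deck action, and the pairing $\la^{k+m+1}.c$ localizes to a single mod-$2$ count of transverse intersection points, which by \eqref{e lambda.alpha}, \eqref{e lambda.beta} and the unit contribution of the interval equals $1$. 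The rest is the standard verification, via \eqref{e pullback by iota} and naturality of cap products, that this local computation indeed equals the global pairing $\la^{k+m+1}.c$ in $H_0(Z,\bbz_2)=\bbz_2$.
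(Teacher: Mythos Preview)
Your high-level factorization --- ``$k$ factors from the $X$-side, $m$ from the $Y$-side, one from the interval'' --- is exactly the structure of the paper's argument, but your chosen execution via Poincar\'e duality and transverse intersection has genuine problems.

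First, $Z$ is an arbitrary finite simplicial complex (a quotient of a join of two simplicial complexes), not a manifold. So there is no Poincar\'e duality available, and speaking of ``a codimension-one cycle dual to $\la$'' or of ``$k+m+1$ hypersurfaces in general position'' is not meaningful here. Second, your description of the open part is off: since the $\bbz_2$ action \eqref{e.Z_2.action} fixes the $t$-coordinate, the complement of $X\sqcup Y$ in $Z$ is $\big((\tx\times\ty)/\bbz_2\big)\times(0,1)$, and the restriction of $\la$ is the class of the double cover $\tx\times\ty \to (\tx\times\ty)/\bbz_2$, \emph{not} something pulled back from an ``$(0,1)/\bbz_2$-type factor'' (there is no such factor). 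Third, even granting the picture, the passage ``$k$ of these cut the $\tx$-factor, $m$ cut the $\ty$-factor'' is never justified: all $k+m+1$ cocycles represent the \emph{same} class $\la$, and without a retraction of $Z$ onto $X$ (which need not exist) there is no canonical way to realize some of them as ``supported on the $X$-side''.

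The paper avoids all of this by working directly at the singular cochain level with the Alexander--Whitney front-face/back-face formula, which needs no manifold hypothesis. Fix a cocycle $h$ with $[h]=\la$. For each $(k+m+1)$-simplex $\rho_\#(\tilde\al_{i,s}*\tilde\be_j)$ the Alexander--Whitney decomposition gives
\[
h^{k+m+1}\big(\rho_\#(\tilde\al_{i,s}*\tilde\be_j)\big)=h^k\big(\text{front }k\text{-face}\big)\cdot h\big([\nu_{k+1},\nu_{k+2}]\big)\cdot h^m\big(\text{back }m\text{-face}\big).
\]
The front $k$-face is $\iota_1\circ\al_i$ and the back $m$-face is $\iota_2\circ\be_j$, independently of $s\in\{1,2\}$ (equations \eqref{e.al}, \eqref{e.be}). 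Summing over $s$ therefore factors out $h^k(\iota_1\circ\al_i)\,h^m(\iota_2\circ\be_j)$, leaving $h$ evaluated on the sum of the two middle edges; since $\tilde\al_{i,2}=T_1\circ\tilde\al_{i,1}$, these two edges form a non-contractible loop in $Z$, so by \eqref{e labmda.c} this contributes $1$. Summing over $i,j$ and invoking \eqref{e pullback by iota}, \eqref{e lambda.alpha}, \eqref{e lambda.beta} finishes. This is the rigorous incarnation of your ``$k+m+1=k+1+m$'' heuristic, and it is exactly where the transfer is used correctly: the point is not an intersection count but that the \emph{pair} of lifts makes the middle-edge sum into a loop detecting $\la$.
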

\begin{proof}
Let us choose \(h:C_{1}(Z,\bbz_2)\ra \bbz_2\) \st \(\la=[h]\). \sps \(\{\nu_1,\nu_2,\dots,\nu_{k+m+2}\}\) are the vertices of \(\De^{k+m+1}\). We note that 
\begin{equation}\label{e.al}
\rh_{\#}(\tilde{\al}_{i,1}*\tilde{\be}_j)\big|_{[\nu_1,\dots,\nu_{k+1}]}=\iota_1\circ\al_i=\rh_{\#}(\tilde{\al}_{i,2}*\tilde{\be}_j)\big|_{[\nu_1,\dots,\nu_{k+1}]}.
\end{equation}
Similarly,
\begin{equation}\label{e.be}
\rh_{\#}(\tilde{\al}_{i,1}*\tilde{\be}_j)\big|_{[\nu_{k+2},\dots,\nu_{k+m+2}]}=\iota_2\circ\be_j=\rh_{\#}(\tilde{\al}_{i,2}*\tilde{\be}_j)\big|_{[\nu_{k+2},\dots,\nu_{k+m+2}]}.
\end{equation}
Moreover,
\begin{align*}
&(\tilde{\al}_{i,1}*\tilde{\be}_j)\big|_{[\nu_{k+1},\nu_{k+2}]}\text{ is a curve joining }\tilde{\al}_{i,1}(\nu_{k+1}) \text{  and  }\tilde{\be}_j(\nu_{k+2});\\
&(\tilde{\al}_{i,2}*\tilde{\be}_j)\big|_{[\nu_{k+1},\nu_{k+2}]}\text{ is a curve joining }\tilde{\al}_{i,2}(\nu_{k+1}) \text{  and  }\tilde{\be}_j(\nu_{k+2}).
\end{align*}
\Sn \(\tilde{\al}_{i,2}=T_1\circ\tilde{\al}_{i,1},\)
\[\rh_{\#}(\tilde{\al}_{i,1}*\tilde{\be}_j)\big|_{[\nu_{k+1},\nu_{k+2}]}+\rh_{\#}(\tilde{\al}_{i,2}*\tilde{\be}_j)\big|_{[\nu_{k+1},\nu_{k+2}]}\]
is a non-contractible loop in \(Z\). \hn using \eqref{e.al}, \eqref{e.be} and \eqref{e labmda.c}, we obtain
\begin{align}
&h^{k+m+1}\left(\rh_{\#}(\tilde{\al}_{i,1}*\tilde{\be}_j)+\rh_{\#}(\tilde{\al}_{i,2}*\tilde{\be}_j)\right) \nonumber\\
&=h^k\left(\iota_1\circ\al_i\right)h\left(\rh_{\#}(\tilde{\al}_{i,1}*\tilde{\be}_j)\big|_{[\nu_{k+1},\nu_{k+2}]}+\rh_{\#}(\tilde{\al}_{i,2}*\tilde{\be}_j)\big|_{[\nu_{k+1},\nu_{k+2}]}\right)h^m\left(\iota_2\circ\be_j\right)\nonumber\\
&=h^k(\iota_1\circ\al_i)h^m(\iota_2\circ\be_j).\label{e h^k.h^m}
\end{align}
\eqref{e pullback by iota}, \eqref{e lambda.alpha}, \eqref{e lambda.beta} and \eqref{e h^k.h^m} imply that \(\la^{k+m+1}.c=1\).
\end{proof}

{\bf Part 4.} \Sn \(\Ps:Z \ra \zn\) is a \((k+m+1)\)-sweepout with no concentration of mass,
\begin{equation}\label{e lower bd of mass of Psi}
\sup_{z\in Z}\{\bM(\Ps(z))\}\geq \om_{k+m+1}.
\end{equation}
The following proposition essentially follows from the proof of Proposition \ref{pr cac set}.
\begin{pro}\label{p mass of Psi tilde}
	For all \((x,y,t)\in \tx*\ty,\)
	\[\bM(\del \tps(x,y,t))\leq \bM(\Ph_1(x))+\bM(\Ph_2(y))+\bM(\del \La(t)).\]
\end{pro}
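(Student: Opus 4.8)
The plan is to deduce Proposition \ref{p mass of Psi tilde} from a single sub-additivity inequality for the perimeter, and to prove that inequality by essentially the same smoothing argument that was used for Proposition \ref{pr cac set}, only with three functions instead of two. Fix $(x,y,t)\in\tx*\ty$ and set $E=\tp_1(x)$, $F=\La(t)$, $G=\tp_2(y)$, all of which lie in $\cm$. By \eqref{e def of Psi tilde} we have $\tps(x,y,t)=(E\cap F)\cup\big(G\cap(M-F)\big)$, and since $\bM(\del H)=\nm{\del H}(M)=P(H,M)$ for every $H\in\cm$, while $\bM(\Ph_1(x))=\bM(\del\tp_1(x))=P(E,M)$, $\bM(\Ph_2(y))=\bM(\del\tp_2(y))=P(G,M)$ (using $\del\circ\tp_i=\Ph_i\circ\rho_i$) and $\bM(\del\La(t))=P(F,M)$, it suffices to prove
\[P\big((E\cap F)\cup(G\setminus F),\,M\big)\leq P(E,M)+P(F,M)+P(G,M).\]

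To prove this, I would invoke \cite{MPPP}*{Proposition 1.4} exactly as in the proof of Proposition \ref{pr cac set} to obtain $\{f_i\},\{h_i\},\{g_i\}\subset C^{\infty}(M)$ with $0\leq f_i,h_i,g_i\leq 1$, converging in $L^1(M)$ and pointwise a.e.\ to $\ch_E,\ch_F,\ch_G$ respectively, and with $\int_M\md{\na f_i}\,d\cH^{n+1}\to P(E,M)$, $\int_M\md{\na h_i}\,d\cH^{n+1}\to P(F,M)$, $\int_M\md{\na g_i}\,d\cH^{n+1}\to P(G,M)$. Then set $u_i=f_ih_i+g_i(1-h_i)=h_if_i+(1-h_i)g_i$. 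Two observations make $u_i$ an admissible competitor: pointwise it is a convex combination of $f_i$ and $g_i$, so $0\leq u_i\leq 1$; and since $E\cap F\subset F$ and $G\setminus F\subset M-F$ are disjoint, $\ch_{(E\cap F)\cup(G\setminus F)}=\ch_E\ch_F+\ch_G(1-\ch_F)$, so $u_i\to\ch_{(E\cap F)\cup(G\setminus F)}$ in $L^1(M)$ by dominated convergence.

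The estimate then proceeds as before. From $\na u_i=h_i\na f_i+(1-h_i)\na g_i+(f_i-g_i)\na h_i$, together with $0\leq h_i\leq 1$ and $\md{f_i-g_i}\leq 1$, one gets
\begin{align*}
\int_M\md{\na u_i}\,d\cH^{n+1}&\leq\int_M\Big(h_i\md{\na f_i}+(1-h_i)\md{\na g_i}+\md{\na h_i}\Big)\,d\cH^{n+1}\\
&\leq\int_M\md{\na f_i}\,d\cH^{n+1}+\int_M\md{\na g_i}\,d\cH^{n+1}+\int_M\md{\na h_i}\,d\cH^{n+1}.
\end{align*}
By the lower semicontinuity of the perimeter under $L^1$ convergence (immediate from its definition as a supremum), $P\big((E\cap F)\cup(G\setminus F),M\big)\leq\liminf_i\int_M\md{\na u_i}\,d\cH^{n+1}$, and since each of the three sequences on the right converges, the right-hand side equals $P(E,M)+P(F,M)+P(G,M)$, which is the desired bound.

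I do not anticipate a genuine obstacle. The only thing to get right is the choice of the competitor $u_i$: it must be $[0,1]$-valued (handled by the convex-combination identity) and it must converge to $\ch_{(E\cap F)\cup(G\setminus F)}$ (handled by the disjointness of $E\cap F$ and $G\setminus F$, which is built into the definition of $\tps$). Everything else is the same three-line computation as in Proposition \ref{pr cac set}, together with the identification $\bM(\del H)=P(H,M)$ for $H\in\cm$ already noted above.
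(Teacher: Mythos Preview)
Your proof is correct and is essentially identical to the paper's own argument: both choose smooth $[0,1]$-valued approximations of $\ch_{\tp_1(x)}$, $\ch_{\tp_2(y)}$, $\ch_{\La(t)}$ via \cite{MPPP}*{Proposition 1.4}, form the same convex-combination competitor $f_ih_i+g_i(1-h_i)$ (in the paper's notation $u_iw_i+v_i(1-w_i)$), and bound its gradient by $\md{\na f_i}+\md{\na g_i}+\md{\na h_i}$ before invoking lower semicontinuity. Your write-up is in fact slightly more explicit about why the competitor stays in $[0,1]$ and why it converges to the correct characteristic function, but the method is the same.
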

\begin{proof}
By \cite{MPPP}*{Proposition 1.4}, there exist \(\{u_i\}_{i=1}^{\infty},\;\{v_i\}_{i=1}^{\infty},\; \{w_i\}_{i=1}^{\infty} \subset C^{\infty}(M)\) \st \(0\leq u_i,v_i,w_i\leq 1\) for all \(i\);
\begin{align*}
& u_i \ra \ch_{\tp_1(x)} \text{ in } L^1(M) \text{ and pointwise a.e.};\quad \bM(\Ph_1(x))=\lim_{i \ra \infty}\int_M |\na u_i|\; d\cH^{n+1};\\
& v_i \ra \ch_{\tp_2(y)} \text{ in } L^1(M) \text{ and pointwise a.e.};\quad \bM(\Ph_2(y))=\lim_{i \ra \infty}\int_M |\na v_i|\; d\cH^{n+1};\\
& w_i \ra \ch_{\La(t)} \text{ in } L^1(M) \text{ and pointwise a.e.};\quad \bM(\del\La(t))=\lim_{i \ra \infty}\int_M |\na w_i|\; d\cH^{n+1}.
\end{align*}
By the dominated convergence theorem,
\[u_iw_i+v_i(1-w_i)\ra \ch_{\tps(x,y,t)} \;\text{ in } L^1(M).\]
\tf
\begin{align*}
\bM(\del \tps(x,y,t))&\leq \liminf_{i \ra \infty}\int_M\Big(\big|\na (u_iw_i)+\na (v_i(1-w_i))\big|\Big)\;d\cH^{n+1}\\
&\leq \liminf_{i \ra \infty}\int_M\Big(\md{\na u_i}+\md{\na v_i}+\md{\na w_i}\Big)\;d\cH^{n+1}\\
&=\bM(\Ph_1(x))+\bM(\Ph_2(y))+\bM(\del\La(t)).
\end{align*}
\end{proof}
Combining Proposition \ref{p mass of Psi tilde}, \eqref{e mass of Phi_1}, \eqref{e mass of Phi_2}, \eqref{e mass of del Laambda} and \eqref{e lower bd of mass of Psi}, we obtain
\[\om_{k+m+1}\leq \om_k+\om_m+W+3\de.\]
\Sn \(\de>0\) is arbitrary, this finishes the proof of Theorem \ref{thm 1.1}.

\section{Proof of Theorem \ref{thm 1.2}}\newcommand{\ce}{c_{\ve}}\newcommand{\tlh}{\tilde{h}}\newcommand{\tz}{\tilde{\ze}}
If \(k=m=0\), \eqref{eq1.2} reduces to \(c_{\ve}(1)\leq \ga_{\ve}\), which holds by the definitions of \(c_{\ve}(1)\) and \(\ga_{\ve}\) (see \cite{GG1}*{Remark 3.6}). So let us assume that \(k\geq 1.\) We fix \(\de>0\). There exists a connected simplicial complex \(\tx\in \cC_k\) and \(\tlh_1\in \Ga(\tx)\) \st 
\begin{equation}\label{e.h1.tilde.energy}
\sup_{x\in\tx}E_{\ve}\big(\tlh_1(x)\big)\leq \ce(k)+\de.
\end{equation}
Similarly, for \(m\geq 1\), there exists a connected simplicial complex \(\ty\in \cC_m\) and \(\tlh_2\in \Ga(\ty)\) \st 
\begin{equation}\label{e.h2.tilde.energy}
\sup_{y\in\ty}E_{\ve}\big(\tlh_2(y)\big)\leq \ce(m)+\de.
\end{equation}
If \(m=0\), we choose \(\ty=\{-1,1\}\) and \(\tlh_2(\pm 1)=\pm\mathbf{1}\). Let \(w:[0,1]\ra H^1(M)\) be \st \(w(0)=\mathbf{1}\), \(w(1)=-\mathbf{1}\) and
\begin{equation}\label{e.w.energy}
\sup_{t\in[0,1]}E_{\ve}(w(t))\leq \ga_{\ve}+\de.
\end{equation}
Suppose \(\cT:H^1(M)\ra H^1(M)\) is the following truncation map (see \cite{Guaraco}*{Section 4}).
\[\cT(u)=\min\{\max\{u,-1\},1\}.\]
Then \(\cT\) is continuous, \(-1\leq\cT(u)\leq 1\), \(\cT(-u)=-\cT(u)\) and
\[E_{\ve}\left(\cT(u)\right)\leq E_{\ve}(u).\]
\tf without loss of generality, we can assume that
\begin{equation}\label{e.trunc}
-1\leq \tlh_1(x),\;\tlh_2(y),\;w(t)\leq 1
\end{equation}
for all \(x\in \tx\), \(y \in \ty\) and \(t \in [0,1]\).

For \(a\in \bbr\), let \(a^+=\max\{a,0\}\); \(a^-=\min\{a,0\}\). The following maps \(\ph,\ps,\tht:H^1(M)\times H^1(M)\times H^1(M)\ra H^1(M)\) were defined in \cite{d2}.
\begin{align*}
& \ph(u_1,u_2,v)=\min\{\max\{u_1,-v\},\max\{u_2,v\}\};\\
& \ps(u_1,u_2,v)=\max\{\min\{u_1,v\},\min\{u_2,-v\}\};\\
& \tht(u_1,u_2,v)=\ph(u_1,u_2,v)^++\ps(u_1,u_2,v)^-.
\end{align*}
\begin{pro}[\cite{d2}*{Proposition 3.12.}]\label{p.tht}
The map \(\tht\), defined above, has the following properties.
\begin{itemize}
	\item[(i)] \(\tht\) is continuous.
	\item[(ii)] \(\tht(-u_1,-u_2,v)=-\tht(u_1,u_2,v).\)
	\item[(iii)] If \(-1\leq u_1,u_2\leq1\), \(\tht(u_1,u_2,\mathbf{1})=u_1\) and \(\tht(u_1,u_2,-\mathbf{1})=u_2\).
	\item[(iv)] \(E_{\ve}(\tht(u_1,u_2,v))\leq E_{\ve}(u_1)+E_{\ve}(u_2)+E_{\ve}(v)\).
\end{itemize}	
\end{pro}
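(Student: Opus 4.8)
\emph{Proof idea.} Everything here is pointwise in character, so the plan is to analyse the scalar functions on $\bbr^3$ underlying $\ph,\ps,\tht$: abusing notation, write $\ph(a,b,c)=\min\{\max\{a,-c\},\max\{b,c\}\}$, $\ps(a,b,c)=\max\{\min\{a,c\},\min\{b,-c\}\}$, $\tht(a,b,c)=\ph(a,b,c)^{+}+\ps(a,b,c)^{-}$ for $(a,b,c)\in\bbr^3$, where $s^{+}=\max\{s,0\}$ and $s^{-}=\min\{s,0\}$, so that $\ph(u_1,u_2,v)(x)=\ph(u_1(x),u_2(x),v(x))$ a.e.\ and likewise for $\ps,\tht$. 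First I would record two elementary pointwise facts: (a)~$\ph\geq\ps$ on $\bbr^3$, obtained by checking termwise that each of $\max\{a,-c\},\max\{b,c\}$ dominates each of $\min\{a,c\},\min\{b,-c\}$; and (b)~the sign pattern $\ph(a,b,c)>0>\ps(a,b,c)$ never occurs, since a one-line case split on the sign of $c$ forces the coordinate $a$ (when $c\geq0$) or $b$ (when $c\leq0$) to be simultaneously positive and negative. From (a) and (b): $\tht=\ph$ on $\{\ps\geq0\}$ and $\tht=\ps$ on $\{\ps<0\}$, so $\tht(a,b,c)\in\{a,b,c,-c\}$ for every $(a,b,c)$, and $\bbr^3$ breaks into finitely many polyhedral cells on each of which $\tht$ coincides with one of the linear functions $a,b,c,-c$. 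This structural description is the input for all four parts.

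Parts (ii), (iii) and (i) are then short. For (ii), the identities $\max\{-a,-c\}=-\min\{a,c\}$, $\max\{-b,c\}=-\min\{b,-c\}$, etc.\ give $\ph(-a,-b,c)=-\ps(a,b,c)$ and $\ps(-a,-b,c)=-\ph(a,b,c)$; since $(-s)^{+}=-s^{-}$ this yields $\tht(-a,-b,c)=-\tht(a,b,c)$, hence (ii) after composing with $(u_1,u_2)\mapsto(-u_1,-u_2)$. For (iii), if $-1\leq a,b\leq1$ then $\ph(a,b,1)=\min\{a,1\}=a$ and $\ps(a,b,1)=\max\{a,-1\}=a$, so $\tht(a,b,1)=a^{+}+a^{-}=a$, and symmetrically $\tht(a,b,-1)=b$; applying this pointwise (valid since $-1\leq u_1(x),u_2(x)\leq1$ a.e.) gives $\tht(u_1,u_2,\mathbf{1})=u_1$ and $\tht(u_1,u_2,-\mathbf{1})=u_2$ in $H^1(M)$. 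For (i), $\tht$ is a finite composition of the lattice operations $(u,w)\mapsto\min\{u,w\}$, $(u,w)\mapsto\max\{u,w\}$, $u\mapsto u^{\pm}$ and the bounded linear maps $(u,w)\mapsto u+w$, $u\mapsto-u$ on $H^1(M)$; writing $\max\{u,w\}=w+(u-w)^{+}$, continuity reduces to continuity of $u\mapsto u^{+}$ on $H^1(M)$, which is classical ($u_k^{+}\to u^{+}$ in $L^2$ from $\md{u_k^{+}-u^{+}}\leq\md{u_k-u}$, and $\na u_k^{+}=\ch_{\{u_k>0\}}\na u_k\to\ch_{\{u>0\}}\na u=\na u^{+}$ in $L^2$ along subsequences, using $\na u=0$ a.e.\ on $\{u=0\}$ and dominated convergence).

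The substantive part is (iv). Given $u_1,u_2,v\in H^1(M)$, pulling back the cell decomposition of $\tht$ on $\bbr^3$ by $x\mapsto(u_1(x),u_2(x),v(x))$ produces a measurable partition $M=A_1\cup A_2\cup A_v\cup A_{-v}$ (up to a null set), determined by which of $u_1,u_2,v,-v$ realizes the relevant nested lattice expression, such that $\tht=u_1$ on $A_1$, $\tht=u_2$ on $A_2$, $\tht=v$ on $A_v$ and $\tht=-v$ on $A_{-v}$. By the Sobolev chain rule for post-composition with a piecewise-linear function — breaking ties arbitrarily, the competing gradients agreeing a.e.\ on tie sets — one gets $\na\tht=\na u_1$ a.e.\ on $A_1$, $\na\tht=\na u_2$ a.e.\ on $A_2$, and $\md{\na\tht}=\md{\na v}$ a.e.\ on $A_v\cup A_{-v}$; and since $W_*$ is even, $W_*(\tht)=W_*(v)$ a.e.\ on $A_v\cup A_{-v}$ as well. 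Writing $e_{\ve}(u)=\ve\md{\na u}^2/2+W_*(u)/\ve\geq0$ for the Allen--Cahn integrand, restricting to a subset of $M$ only decreases $\int e_{\ve}$, so
\[
E_{\ve}(\tht)=\int_{A_1}e_{\ve}(u_1)+\int_{A_2}e_{\ve}(u_2)+\int_{A_v\cup A_{-v}}e_{\ve}(v)\leq E_{\ve}(u_1)+E_{\ve}(u_2)+E_{\ve}(v),
\]
which is (iv). The algebra of nested minima and maxima is routine; the one genuinely delicate point is this bookkeeping — ensuring $\na\tht$ restricts to the gradient of the correct source function cell by cell so that the chain rule applies — which is exactly the content of \cite{d2}*{Proposition 3.12}, whose argument the above follows.
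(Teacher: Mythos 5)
Your proof is correct. Note that this paper does not actually prove the proposition --- it is imported verbatim from \cite{d2}*{Proposition 3.12} --- so there is no internal argument to compare against; what you have written is a sound, self-contained reconstruction. The two structural facts you isolate, namely \(\ph\geq\ps\) pointwise and the impossibility of \(\ph>0>\ps\), are exactly what is needed to conclude \(\tht=\ph\) on \(\{\ps\geq0\}\) and \(\tht=\ps\) on \(\{\ps<0\}\), hence \(\tht(a,b,c)\in\{a,b,c,-c\}\), and both facts check out (the second by the sign-of-\(c\) case split you indicate). Parts (ii) and (iii) then follow by the de Morgan identities and direct evaluation as you say, and (i) reduces to the classical continuity of \(u\mapsto u^{+}\) on \(H^1(M)\). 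For (iv), the only step worth phrasing more carefully is the cell-by-cell gradient identity: rather than invoking a chain rule for the piecewise-linear outer function, it is cleanest to observe that \(\tht(u_1,u_2,v)\in H^1(M)\) agrees a.e.\ with \(u_1\) (resp.\ \(u_2\), \(v\), \(-v\)) on the measurable set \(A_1\) (resp.\ \(A_2\), \(A_v\), \(A_{-v}\)), and that weak gradients of \(H^1\) functions coinciding a.e.\ on a measurable set agree a.e.\ there; this makes the tie-breaking automatic. With that, the nonnegativity of the integrand \(e_{\ve}\) and the evenness of \(W_*\) give the inequality exactly as you wrote it.
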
\newcommand{\tq}{\tilde{q}}

We define \(\tq:\tx*\ty\ra H^1(M)\) as follows.
\begin{equation}\label{e.def.zeta.tilde}
\tq(x,y,t)=\tht\left(\tlh_1(x),\tlh_2(y),w(t)\right).
\end{equation}
By \eqref{e.trunc} and Proposition \ref{p.tht}, items (i), (iii), \(\tq\) is a well-defined, \cts map and 
\begin{equation}
\tq\big|_{\tx}=\tlh_1;\quad\tq\big|_{\ty}=\tlh_2.
\end{equation}
(We recall from Part \(3\) of the proof of Theorem \ref{thm 1.1} that \(\tx\) and \(\ty\) can be naturally identified as subspaces of \(\tx*\ty\).)
\begin{clm}\label{c.zeta.sweepout}
\(\tilde{Z}=\tx*\ty\in \cC_{k+m+1}\) and there exists \(\tz\in \Ga(\zt)\) \st
\begin{equation}
E_{\ve}(\tz(z))\leq E_{\ve}(\tq(z))+\de\quad\forall z\in \zt.\label{e.cl.energy.tz'}
\end{equation}
\end{clm}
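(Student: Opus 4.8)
The claim comprises two independent assertions, which I would prove separately.

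\textbf{That $\zt=\tx*\ty\in\cC_{k+m+1}$.} As in Section~\ref{s2.4} and Part 2 of the proof of Theorem~\ref{thm 1.1}, $\tx*\ty$ is a finite simplicial complex carrying the free simplicial $\bbz_2$-action \eqref{e.Z_2.action}, so it remains only to show $\text{Ind}_{\bbz_2}(\tx*\ty)\ge k+m+2$. Since $\tx\in\cC_k$ and $\ty\in\cC_m$, the classifying maps $f_1:X\to B$, $f_2:Y\to B$ of the double covers $\tx\to X:=\tx/\bbz_2$, $\ty\to Y:=\ty/\bbz_2$ satisfy $f_1^*(\xi^k)\ne0$, $f_2^*(\xi^m)\ne0$; choose $\al\in H_k(X,\bbz_2)$, $\be\in H_m(Y,\bbz_2)$ Kronecker-dual to these classes. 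Put $Z=(\tx*\ty)/\bbz_2$ and let $g:Z\to B$ classify $\zt\to Z$; since the restriction of $\zt\to Z$ over $X$ (resp.\ $Y$) is $\tx\to X$ (resp.\ $\ty\to Y$), we get $g^*\xi|_X=f_1^*\xi$, $g^*\xi|_Y=f_2^*\xi$, so $g^*\xi\ne0$ (as $k\ge1$), hence $g^*\xi=\la$, the generator of $H^1(Z,\bbz_2)=\bbz_2$ ($\tx*\ty$ being simply connected). I would then run, almost verbatim, the homological argument of Part 3 of the proof of Theorem~\ref{thm 1.1}, with $\Ph_i$ replaced by $f_i$ and $\Ps$ by $g$: via the transfer homomorphism $\ta$ and the join-of-chains operation, form $c=[\rh_{\#}(\sum_{i,j}\ta(\al_i)*\tilde{\be}_j)]\in H_{k+m+1}(Z,\bbz_2)$ and verify $\la^{k+m+1}\cdot c=1$, whence $g^*(\xi^{k+m+1})\ne0$ and $\zt\in\cC_{k+m+1}$. (This is simply the super-additivity of the $\bbz_2$-cohomological index under joins, $\text{Ind}_{\bbz_2}(A*B)\ge\text{Ind}_{\bbz_2}(A)+\text{Ind}_{\bbz_2}(B)$, applied to $\tx\in\cC_k$, $\ty\in\cC_m$.)

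\textbf{Construction of $\tz$.} The map $\tq$ is continuous; by Proposition~\ref{p.tht}(ii) with $\tlh_1\in\Ga(\tx)$, $\tlh_2\in\Ga(\ty)$ it is $\bbz_2$-equivariant; and by \eqref{e.trunc} together with $\tht=\ph^++\ps^-$ it takes values in $C=\{u\in H^1(M):-1\le u\le1\}$, on which $E_\ve$ is $H^1$-continuous ($W_*$ being Lipschitz on $[-1,1]$). Moreover $\tq(\zt)$ is compact, so $R:=\sup_{\zt}\|\tq\|_{H^1}<\infty$. The only obstruction to taking $\tz=\tq$ is that $\tq$ may take the value $0\in H^1(M)$; I would remove it by exploiting that $\zt$ is finite-dimensional while $H^1(M)$ is not. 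Fix $\eta>0$; choose a finite $\eta$-net of $\tq(\zt)$, let $W\subset H^1(M)$ be its (finite-dimensional) span and $\Pi_W$ the orthogonal projection. Since $W^{\perp}$ is an infinite-dimensional separable Hilbert space, $S(W^{\perp})$ is contractible with free antipodal $\bbz_2$-action, so—just as $\Ga(\zt)\ne\emptyset$—there is a continuous $\bbz_2$-equivariant $\si:\zt\to S(W^{\perp})$. Set $G(z)=\Pi_W(\tq(z))+\eta\,\si(z)$; then $G$ is continuous and equivariant, $\|G(z)\|_{H^1}^2=\|\Pi_W\tq(z)\|_{H^1}^2+\eta^2\ge\eta^2>0$, and $\|G(z)-\tq(z)\|_{H^1}\le\eta+\|\Pi_{W^{\perp}}\tq(z)\|_{H^1}\le2\eta$. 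Now put $\tz:=\cT\circ G$: since $\cT^{-1}(0)=\{0\}$ it never vanishes, and since $\cT(-u)=-\cT(u)$ it is equivariant, so $\tz\in\Ga(\zt)$. Using $|\na\cT(G(z))|\le|\na G(z)|$ pointwise, the pointwise $1$-Lipschitzness of $\cT$ (so $\|\cT(G(z))-\tq(z)\|_{L^2}=\|\cT(G(z))-\cT(\tq(z))\|_{L^2}\le2\eta$), the Lipschitz bound for $W_*$ on $[-1,1]$, and the bound $R$, one obtains $E_\ve(\tz(z))\le E_\ve(\tq(z))+C(\ve,R,M)\,\eta$ for all $z\in\zt$; choosing $\eta$ small enough that $C(\ve,R,M)\,\eta<\de$ yields \eqref{e.cl.energy.tz'}.

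\textbf{Main obstacle.} The first assertion is essentially bookkeeping: it is Part 3 of the proof of Theorem~\ref{thm 1.1} with sweepouts replaced by classifying maps. The substantive step is the construction of $\tz$—pushing the map into $H^1(M)\setminus\{0\}$, equivariantly, at the cost of only $\de$ in energy. This is delicate because $\tq$ can genuinely take the value $0$ (precisely when $w(t)$ changes sign on $M$ in a manner coordinated with the nodal sets of $\tlh_1(x)$ and $\tlh_2(y)$), because one cannot make $E_\ve$ small by shrinking a function, and because a careless perturbation breaks equivariance; the remedy above sidesteps all three by perturbing $\tq$ only along the complemented infinite-dimensional direction $W^{\perp}$—keeping the correction orthogonal to the principal part, hence $H^1$-cheap and automatically nonvanishing—and then re-truncating by $\cT$.
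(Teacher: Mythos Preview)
Your proof is correct. The first assertion ($\zt\in\cC_{k+m+1}$) is handled exactly as in the paper: both arguments reduce to the super-additivity of the $\bbz_2$-cohomological index under joins and invoke Part~3 of the proof of Theorem~\ref{thm 1.1} verbatim.

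For the construction of $\tz$, however, you take a genuinely different route. The paper works simplicially and locally: it chooses $r>0$ so small that $|E_\ve(u)-E_\ve(0)|<\de/2$ on $B_r$ and $B_r$ misses $\tlh_1(\tx)\cup\tlh_2(\ty)$, barycentrically subdivides $\zt$ so that every simplex meeting $\zt_0=\tq^{-1}(0)$ is mapped by $\tq$ into $B_r$, and then redefines $\tz$ on those simplices by an inductive equivariant extension into the contractible set $B_r\setminus\{0\}$, leaving $\tq$ untouched elsewhere. The energy bound is then immediate from continuity of $E_\ve$ at $0$, and as a by-product one gets $\tz|_{\tx}=\tlh_1$, $\tz|_{\ty}=\tlh_2$ (which the paper uses to verify the index condition via the specific map $\ze$, though this is inessential since the index is intrinsic to $\zt$). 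Your construction is global and analytic: project $\tq$ to a finite-dimensional $W$ containing an $\eta$-net of its image, add an equivariant perturbation $\eta\si$ in $W^\perp$ so that orthogonality forces $G\ne0$, and re-truncate by $\cT$; the energy bound then requires the explicit estimate via $\|G-\tq\|_{H^1}\le2\eta$, the Lipschitz bound for $W_*$ on $[-1,1]$, and the uniform bound $R$. The paper's method is more elementary and needs no quantitative energy estimate; yours avoids subdivision and simplex-by-simplex induction, and the transversality-flavoured perturbation into an infinite-dimensional complement is a cleaner, more portable idea.
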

\begin{proof}
Let \(T_1:\tx\ra \tx\) be \st the \(\bbz_2\) action on \(\tx\) is given by \(x\mapsto T_1(x)\) and \(T_2:\ty\ra\ty\) be \st the \(\bbz_2\) action on \(\ty\) is given by \(y \mapsto T_2(y)\). There exist simplicial complex structures on \(\tx\) and \(\ty\) \st the maps \(T_1\) and \(T_2\) are simplicial homeomorphisms. Let \(\{\De_{1,i}:i=1,\dots,I\}\) and \(\{\De_{2,j}:j=1,\dots,J\}\) be the set of simplices in \(\tx\) and \(\ty\). As discussed in Section \ref{s2.4}, one can put a canonical simplicial complex structure on \(\zt=\tx*\ty\), whose simplices are \(\{\De_{1,i},\;\De_{2,j},\;\De_{1,i}*\De_{2,j}:i=1,\dots,I;\;j=1,\dots,J\}\). There is a free, simplicial \(\bbz_2\) action on \(\zt\) given by 
\[z=(x,y,t)\mapsto T(z)=(T_1(x),T_2(y),t).\]
By Proposition \ref{p.tht}(ii), \(\tq(T(z))=-\tq(z)\) for all \(z\in \zt\). There might exist \(z\in \zt\) \st \(\tq(z)=0\in H^1(M)\). However, as we discuss below, one can perturb \(\tq\) and obtain a new map \(\tz:\zt \ra H^1(M)\setminus\{0\}\), which is \(\bbz_2\)-equivariant and satisfies
\begin{align}
& E_{\ve}(\tz(z))\leq E_{\ve}(\tq(z))+\de\quad\forall z\in \zt;\label{e.energy.tz'}\\
& \tz\big|_{\tx}=\tq\big|_{\tx}=\tlh_1;\quad\tz\big|_{\ty}=\tq\big|_{\ty}=\tlh_2. \label{e.zeta.res}
\end{align}

Let \(\{e_s,f_s:s=1,\dots,S\}\) be the set of all simplices in \(\zt\), indexed in such a way that \(T(e_s)=f_s\) and \(s_1\leq s_2\) implies \(\dim(e_{s_1})\leq \dim(e_{s_2})\). We will define \(\tz\) inductively on the simplices of \(\zt\). By abuse of notation, a simplex of \(\zt\) will be identified with its support. For \(R>0\), let 
\[B_R=\{u \in H^1(M):\|u\|_{H^1(M)}<R\}.\]
\(B_R\setminus\{0\}\), being homeomorphic to \(H^1(M)\setminus\{0\}\), is contractible. Let \(r>0\) be \st
\begin{itemize}
	\item \(B_r\) is disjoint from \(\tlh_1(\tx)\) and \(\tlh_2(\ty)\) (we note that \(\tlh_1(\tx)\) and \(\tlh_2(\ty)\) are compact subsets of \(H^1(M)\), which are disjoint from \(\{0\}\));
	\item \(\md{E_{\ve}(u)-E_{\ve}(0)}<\de/2\) for all \(u \in B_r\).
\end{itemize}
Suppose
\[\zt_0=\{z\in\zt:\tq(z)=0\in H^1(M)\}.\]
Using barycentric subdivision, without loss of generality one can assume that 
\begin{equation}\label{e.al.assumption}
\text{if } \al \text{ is a simplex in } \zt \text{ and } \al\cap\zt_0\neq \emptyset, \text{ then } \tq(\al) \subset B_r. 
\end{equation}

If \(s\) is \st \(\dim(e_s)=0\), we set 
\begin{equation}
\tz(e_s)=\begin{cases}
\tq(e_s)& \text{ if }\tq(e_s)\neq 0;\\
\text{an arbitrary point in }B_r\setminus\{0\} & \text{ if }\tq(e_s)= 0;
\end{cases}
\end{equation}
and \(\tz(f_s)=-\tz(e_s)\). Let us assume that we have defined
\[\tz:\bigcup_{s\leq l-1}(e_s\cup f_s) \ra H^1(M)\setminus\{0\}\]
in such a way that
\begin{itemize}
	\item[(i)] \(\tz(T(z))=-\tz(z)\) for all \(z\in \bigcup_{s\leq l-1}(e_s\cup f_s)\);
	\item[(ii)] \(\tz\big|_{e_s\cup f_s}=\tq\big|_{e_s\cup f_s}\) if \(s\leq l-1\) and \(e_s\cap\zt_0= \emptyset\) (which is equivalent to \(f_s\cap\zt_0= \emptyset\));
	\item[(iii)] \((\tz(e_s)\cup\tz(f_s))\subset B_r\setminus\{0\}\) if \(s\leq l-1\) and \(e_s\cap\zt_0\neq \emptyset\) (which is equivalent to \(f_s\cap\zt_0\neq \emptyset\)).
\end{itemize}
We define \(\tz:(e_l\cup f_l) \ra H^1(M)\setminus\{0\}\) as follows. By the induction hypothesis, \(\tz\) has already been defined on \((\del e_l \cup \del f_l)\). If $e_l\cap\zt_0= \emptyset=f_l\cap\zt_0$, we set
\[\tz\big|_{e_l \cup f_l}=\tq\big|_{e_l \cup f_l}.\] 
Otherwise, \(\tz\) is defined on \(e_l\) in such a way that the definition of \(\tz\) on \(\del e_l\) matches with its previous definition and \(\tz(e_l) \subset B_r\setminus\{0\}\). Such an extension is possible as \(B_r\setminus\{0\}\) is contractible. For \(z\in f_l\), \(\tz(z)\) is defined to be equal to \(-\tz(T(z))\). Thus we have defined \(\tilde{\ze}:\zt\ra H^1(M)\setminus \{0\},\) which is \(\bbz_2\)-equivariant. \eqref{e.energy.tz'} and \eqref{e.zeta.res} follow from the choice of \(r\) and \eqref{e.al.assumption}.

\sps \(Z\) and \(\sH(M)\) denote the quotients of \(\zt\) and \(H^1(M)\setminus \{0\}\), respectively, by \(\bbz_2\). Then \(\tz\) descends to a map \(\ze:Z\ra \sH(M)\). As discussed in Section \ref{s2.3}, \(H^*(\sH(M),\bbz_2)=\bbz_2[\xi]\). Furthermore, \(\zt\in \cC_{k+m+1}\) if and only if 
\begin{equation}\label{e.pullbak.zeta}
\ze^*(\xi^{k+m+1})\neq 0 \in H^{k+m+1}(Z,\bbz_2).
\end{equation}
However, \sn \(\tx\in \cC_k\), if \(X\) denotes the quotient of \(\tx\) by \(\bbz_2\), \(\tlh_1\) descends to a map \(h_1:X\ra \sH(M)\) \st
\begin{equation}\label{e.pullback.h1}
h_1^*(\xi^k)\neq 0 \in H^k(X,\bbz_2).
\end{equation}
Similarly, as \(\ty\in\cC_m\), if \(Y\) denotes the quotient of \(\ty\) by \(\bbz_2\), \(\tlh_2\) descends to a map \(h_2:Y\ra \sH(M)\) \st
\begin{equation}\label{e.pullback.h2}
h_2^*(\xi^m)\neq 0 \in H^m(Y,\bbz_2).
\end{equation}
One can prove \eqref{e.pullbak.zeta} by using \eqref{e.pullback.h1}, \eqref{e.pullback.h2}, \eqref{e.zeta.res} and an argument similar to Part \(3\) of the proof of Theorem \ref{thm 1.1}.
\end{proof}

By the above Claim \ref{c.zeta.sweepout},
\begin{equation}\label{e.k+m+1}
\sup_{z\in \tx*\ty}E_{\ve}\big(\tz(z)\big)\geq \ce(k+m+1).
\end{equation}
On the other hand, by \eqref{e.cl.energy.tz'} and Proposition \ref{p.tht}(iv), for all \((x,y,t)\in \tx*\ty\),
\begin{equation}\label{e.k.m.1}
E_{\ve}\big(\tz(x,y,t)\big)\leq E_{\ve}\big(\tq(x,y,t)\big)+\de\leq E_{\ve}\big(\tlh_1(x)\big)+E_{\ve}\big(\tlh_2(y)\big)+E_{\ve}(w(t))+\de.
\end{equation}
Combining \eqref{e.k+m+1}, \eqref{e.k.m.1}, \eqref{e.h1.tilde.energy}, \eqref{e.h2.tilde.energy} and \eqref{e.w.energy}, we obtain
\[\ce(k+m+1)\leq \ce(k)+\ce(m)+\ga_{\ve}+4\de.\]
\Sn \(\de>0\) is arbitrary, this finishes the proof of Theorem \ref{thm 1.2}.

\medskip
\nocite{*}
\bibliographystyle{amsalpha}
\bibliography{widthsub}

\end{document}